\documentclass[11pt]{article}
\usepackage[utf8]{inputenc}
\usepackage{amsmath, amsthm, amssymb, mathrsfs}
\usepackage{geometry}
\usepackage{setspace}
\theoremstyle{plain}
\newtheorem{theorem}{Theorem}
\newtheorem{lemma}[theorem]{Lemma}
\newtheorem{proposition}[theorem]{Proposition}
\newtheorem{corollary}[theorem]{Corollary}

\theoremstyle{definition}
\newtheorem{definition}[theorem]{Definition}
\newtheorem{remark}[theorem]{Remark}

\begin{document}

\title{\textbf{Mazur's Separable Quotient Problem for Nonseparable
Bourgain-Pisier $\mathscr{L}_\infty$-Spaces}}
\author{Kartik Patri\\Northeastern University}
\date{February 2026}
\maketitle

\begin{abstract}
Mazur's separable quotient problem, open since 1932, asks whether every
infinite-dimensional Banach space admits an infinite-dimensional separable
quotient. We prove that any $\mathscr{L}_\infty$-space $Y$ containing a
subspace $X$ such that $Y/X$ is infinite-dimensional with the Schur property
admits $c_0$ as a quotient. The natural class to which this criterion applies
is the nonseparable $\mathscr{L}_\infty$-spaces constructed via the
Lopez-Abad extension method, the nonseparable analogue of the
Bourgain--Delbaen spaces. For every space in this class, Mazur's problem
is thereby resolved affirmatively, for any valid realization of the
construction and any base space. We further provide a constructive resolution
under a coordinate embedding assumption via an explicit bounded surjection
$T: Y \to c_0$ whose kernel is an $\mathscr{L}_{\infty,\lambda}$-space of
density $\kappa$. We prove this assumption is necessary by explicit
counterexample.
\end{abstract}

\section{Introduction}

Mazur's separable quotient problem, posed in 1932, asks whether every
infinite-dimensional Banach space admits an infinite-dimensional separable
quotient. Despite nearly a century of attention, the problem remains open
in general. Notably however, affirmative answers are known for reflexive
spaces, weakly compactly generated spaces, and dual spaces \cite{MY20}.

For separable spaces, Mazur's problem has an affirmative answer by a
direct observation: any separable infinite-dimensional Banach space is
itself an infinite-dimensional separable quotient of itself via the
identity map. The problem is genuinely difficult in the nonseparable
setting, where this argument fails entirely and no analogous shortcut
is available. The nonseparable $\mathscr{L}_\infty$-spaces constructed
by Lopez-Abad \cite{Lo13} via a generalization of the Bourgain-Pisier
method to arbitrary uncountable densities are the natural next class
to consider. These spaces are constructed so that the quotient $Y/X$
has the Radon--Nikod\'ym property (every bounded martingale converges
in norm) and the Schur property (weakly convergent sequences are norm
convergent), placing them far from reflexive or weakly compactly
generated spaces, and no result in the literature resolves Mazur's
problem for them.

In this paper, we resolve Mazur's separable quotient problem for this
class via two complementary approaches:

\begin{theorem}[General Criterion]
\label{thm:general}
Let $Y$ be an $\mathscr{L}_\infty$-space containing a subspace $X$ such
that $Y/X$ is infinite-dimensional and has the Schur property. Then $Y$
admits $c_0$ as a quotient.
\end{theorem}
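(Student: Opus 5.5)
The plan is to reduce the statement to the standard duality criterion for $c_0$-quotients. A Banach space $Y$ admits $c_0$ as a quotient if and only if $Y^*$ contains a weak$^*$-null sequence $(f_n)$ equivalent to the unit vector basis of $\ell_1$. The reason is as follows. Given such $(f_n)$, the map $Ty=(f_n(y))_n$ takes values in $c_0$ because $(f_n)$ is weak$^*$-null. It is bounded because $(f_n)$ is bounded. Its adjoint $T^*:\ell_1\to Y^*$ satisfies $T^*e_n=f_n$, so $T^*$ is an isomorphic embedding, and hence $T$ is onto by the closed range theorem. Since $Z:=Y/X$ is a quotient of $Y$, any $c_0$-quotient of $Z$ is also one of $Y$. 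Moreover $Z^*\cong X^\perp\subset Y^*$ via a weak$^*$-to-weak$^*$ continuous isometry, so a weak$^*$-null $\ell_1$-sequence in $Z^*$ is also one in $Y^*$. It therefore suffices to produce such a sequence in $X^\perp$.

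\emph{Step 1.} Since $Z$ is infinite-dimensional, the Josefson--Nissenzweig theorem gives a normalized weak$^*$-null sequence $(g_n)$ in $Z^*=X^\perp$.

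\emph{Step 2.} Apply Rosenthal's $\ell_1$ theorem to $(g_n)$. Either some subsequence is equivalent to the $\ell_1$ basis, and we are done by the criterion above, or some subsequence is weakly Cauchy in $Y^*$. In the second case we use the $\mathscr{L}_\infty$ hypothesis. Because $Y$ is an $\mathscr{L}_\infty$-space, $Y^*$ is isomorphic to a complemented subspace of some $L_1(\mu)$, so $Y^*$ is weakly sequentially complete. The weakly Cauchy subsequence therefore converges weakly. Its weak limit must coincide with its weak$^*$ limit $0$, so it is weakly null in $Y^*$, hence weakly null in $X^\perp$ and in $Z^*$.

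\emph{Step 3: the main obstacle.} We must exclude the second alternative, namely a normalized weakly null sequence $(g_n)$ in $Z^*$, or at least show that $(g_n)$ can be replaced by a better Josefson--Nissenzweig sequence. The first thing I would try uses norming vectors. Choose $z_n\in B_Z$ with $g_n(z_n)>1/2$.
\begin{itemize}
\item If $(z_n)$ had a weakly Cauchy subsequence, the Schur property (Schur spaces are weakly sequentially complete) would make it norm convergent, say to $z$. Then $g_n(z_n)=g_n(z)+g_n(z_n-z)\to 0$, a contradiction.
\item Hence $(z_n)$ has an $\ell_1$-subsequence. Lift it to a bounded sequence $(y_n)$ in $Y$.
\item Combine this with the Dunford--Pettis property of $Y$ and of $Y^*$, both of which are $\mathscr{L}_\infty$/$\mathscr{L}_1$ spaces. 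The aim is to show that the pairing $g_n(y_n)$ is forced to $0$ on a suitable block or difference sequence, which contradicts $g_n(z_n)>1/2$.
\end{itemize}
If this direct contradiction fails, the fallback is to modify $(g_n)$ itself. Using that $Z$ contains $\ell_1$, which holds for every infinite-dimensional Schur space by Rosenthal's theorem, I would attempt to build, inside $X^\perp$, a weak$^*$-null sequence that norms an $\ell_1$-sequence of $Z$ uniformly. I would then run a gliding-hump argument in the $L_1$-structure of $Y^*$ to obtain a subsequence equivalent to the $\ell_1$ basis. This interaction between the Schur property of $Z$ and the $L_1$-structure of $Y^*$ is where I expect the real difficulty to lie.
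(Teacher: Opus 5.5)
Your reduction and Steps 1--2 are sound. The weak$^*$-null $\ell_1$-sequence criterion for $c_0$-quotients is standard. Since $Y^*$ is complemented in some $L_1(\mu)$, it is weakly sequentially complete, so the weakly Cauchy alternative does collapse to a weakly null one. The genuine gap is Step 3, which is the entire difficulty and is not carried out. A single Josefson--Nissenzweig sequence can be weakly null even over a Schur space. In $Z=\ell_1$, the coordinate functionals $e_n^*\in c_0\subset\ell_\infty$ are normalized, weak$^*$-null, weakly null, and normed by the $\ell_1$-basis. So the configuration you reach in the first bullet (an $\ell_1$-sequence $(z_n)$ with $g_n(z_n)>1/2$ and $(g_n)$ weakly null) is not contradictory in itself. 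The Dunford--Pettis property cannot supply the contradiction either. It only constrains $g_n(u_n)$ when $(u_n)$ is weakly null. For any weakly null $(u_n)$ in $Y$ (lifts, blocks and differences alike), the Schur property of $Z$ already forces $Qu_n\to 0$ in norm, hence $g_n(u_n)\to 0$. So no such sequence can keep $g_n$ bounded below. The fallback paragraph is a hope rather than an argument. What you actually need is a weak$^*$-null sequence in $Y^*$ (for instance in $X^\perp$) with no weakly null subsequence, i.e.\ that $Y$ is not Grothendieck. Given that, your Step 2 finishes the proof verbatim.

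This is exactly the ingredient the paper tries to supply. If $Y$ were Grothendieck, its quotient $Y/X$ would be Grothendieck too (the $Q^*$/$Q^{**}$ duality argument, which is fine). The paper then invokes Theorem~\ref{thm:schurnotgrothendieck}, that no infinite-dimensional space is both Schur and Grothendieck. The $c_0$-quotient then comes from Theorem~\ref{thm:linftygrothendieck}, which is precisely what your Steps 1--2 reprove. Be warned, though, that the paper's proof of Theorem~\ref{thm:schurnotgrothendieck} rests on the claim that a weakly compact, completely continuous operator is compact, and that claim is false. The formal identity $\ell_1\to c_0$ factors through $\ell_2$, so it is weakly compact. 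It is completely continuous because $\ell_1$ is Schur. It is not compact. The standard argument for that implication needs the domain to contain no copy of $\ell_1$, and every infinite-dimensional Schur space contains $\ell_1$ by Rosenthal's theorem. So adopting the paper's route does not by itself close your gap. You still need a valid proof, or a reliable reference, that an infinite-dimensional Schur space cannot be Grothendieck (or at least that a Schur quotient of an $\mathscr{L}_\infty$-space cannot be).
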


\begin{corollary}[Mazur's Problem for Lopez-Abad Spaces]
\label{cor:lopezabad}
Let $X$ be any infinite-dimensional Banach space and let $Y$ be any
nonseparable Bourgain-Pisier $\mathscr{L}_\infty$-space over $X$ constructed
via the Lopez-Abad method. Then $Y$ admits $c_0$ as a quotient. In
particular, Mazur's separable quotient problem has an affirmative answer
for every space in this class, for every valid realization of the
construction.
\end{corollary}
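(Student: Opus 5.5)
The plan is to deduce the statement directly from Theorem~\ref{thm:general}. That requires checking three hypotheses for a space $Y$ produced by the Lopez-Abad method over $X$: (i) $Y$ is an $\mathscr{L}_\infty$-space, (ii) $X$ embeds in $Y$ as a subspace, and (iii) $Y/X$ is infinite-dimensional with the Schur property. Once these hold, Theorem~\ref{thm:general} gives a bounded surjection $Y \to c_0$. Since $c_0$ is separable and infinite-dimensional, this answers Mazur's question affirmatively for $Y$. The argument should use only the properties that every realization of the construction guarantees, not any particular choice of parameters, so the conclusion holds ``for every valid realization''.

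For (i) and (ii) I will cite the construction in \cite{Lo13}. The space $Y$ is built as the closure of an increasing union of finite-dimensional spaces $\ell_\infty^{n}$, each $\lambda$-isomorphic to its image, with extension operators of uniformly bounded norm. This gives $Y\in\mathscr{L}_{\infty,\lambda}$. The embedding $X\hookrightarrow Y$ is built into the construction: $X$ is the base space, and the Bourgain--Pisier scheme extends it. For the Schur half of (iii), I will again quote the main structural theorem of the Lopez-Abad method. It states that $Y/X$ has the Schur property (together with the Radon--Nikod\'ym property), exactly as noted in the introduction. This is the nonseparable analogue of the Bourgain--Pisier theorem, and I will use it as a black box.

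The step I expect to need real care is that $Y/X$ is infinite-dimensional. This is not automatic, since the statement puts no restriction on $X$ beyond infinite dimension. My first approach is a density argument. The construction adjoins new coordinates indexed by a set of cardinality $\kappa$ exceeding the density of $X$ (that is the sense of ``nonseparable'' here). Hence $\operatorname{dens}(Y)=\kappa>\operatorname{dens}(X)$. A finite-dimensional extension cannot raise density, so if $Y/X$ were finite-dimensional we would get $\operatorname{dens}(Y)=\operatorname{dens}(X)$, a contradiction. If instead the realization allows $\operatorname{dens}(X)=\kappa$, I will fall back on a second argument. Suppose $Y/X$ were finite-dimensional. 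Then $X$ would be a finite-codimensional (hence complemented) subspace of the $\mathscr{L}_\infty$-space $Y$, and therefore itself an $\mathscr{L}_\infty$-space. The construction, however, produces infinitely many new coordinate functionals whose images in $Y/X$ are linearly independent: the new $\ell_\infty^{n}$-blocks are not absorbed by $X$. Showing this independence directly from the coordinate description is the point I expect to be most delicate.

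With (i)--(iii) established, applying Theorem~\ref{thm:general} completes the proof.
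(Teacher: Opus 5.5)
Your overall route is the paper's: apply Theorem~\ref{thm:general}, and take the $\mathscr{L}_\infty$ property of $Y$, the embedding $X\subseteq Y$ and the Schur property of $Y/X$ from Lopez-Abad's theorem. The paper does not argue infinite-dimensionality of $Y/X$ separately; it simply reads it off from that theorem. You are right that this step is not automatic, but neither of your arguments for it works.

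The density argument rests on a false premise. In this construction $\kappa$ is by definition the density of $X$, since the dense set $D=\{d_\alpha:\alpha<\kappa\}$ lies in $X$. Theorem~\ref{thm:lopezabad} gives $\operatorname{dens}(Y)=\kappa$ as well. So ``nonseparable'' means $\kappa\geq\aleph_1$, not $\kappa>\operatorname{dens}(X)$, and you are always in your fallback case. There, the observation that $X$ would be an $\mathscr{L}_\infty$-space gives no contradiction, because $X$ is arbitrary. Taking $X=\ell_\infty$ shows why this matters: a finite-dimensional $Y/X$ would make $X$ complemented in $Y$ and $Y\cong\ell_\infty$. That space is Grothendieck and has no $c_0$ quotient. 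So infinite-dimensionality must come from the construction itself, and the linear-independence claim you postpone is the entire content of the step.

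What supplies it is that $S_t\subsetneq\ell_\infty^{n_t}$ at every stage, which the paper records from Lopez-Abad's Lemma~3.5. Consider $E_t^{(s)}=(\ell_\infty^{n_t}\oplus_1E_{\bar t}^{(s)})/N_{\nu_t}$ and the map $[(b,e)]\mapsto b+S_t$ onto $\ell_\infty^{n_t}/S_t$.
\begin{itemize}
\item It is well defined and onto.
\item Its kernel is exactly $j_{\bar t,t}^{(s)}(E_{\bar t}^{(s)})$, because for $b\in S_t$ one has $[(b,e)]=[(0,e+\nu_t(b))]$.
\end{itemize}
Hence each pushout raises the codimension of $X$ by at least one, and $\dim\bigl(E_s/j_{\emptyset,s}(X)\bigr)$ is unbounded in $s$.

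It remains to transfer this to $Y/X$.
\begin{itemize}
\item At every step $E_t^{(s)}=G_t^{(s)}+j_{\bar t,t}^{(s)}(E_{\bar t}^{(s)})$. So $E_s$ is spanned by $j_{\emptyset,s}(X)$ and the images $j_{u,s}(G_u)$ for $u\subseteq s$, which gives $j_{s,\infty}(E_s)\subseteq Y$.
\item The map $j_{s,\infty}$ is injective, and $j_{s,\infty}\circ j_{\emptyset,s}$ is the embedding of $X$. So $j_{s,\infty}$ induces an injection $E_s/j_{\emptyset,s}(X)\to Y/X$.
\end{itemize}
Therefore $\dim(Y/X)=\infty$, and the rest of your argument goes through.
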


\begin{theorem}[Explicit Construction]
\label{thm:explicit}
Let $Y$ be a nonseparable Bourgain-Pisier $\mathscr{L}_\infty$-space of
arbitrary uncountable density $\kappa \geq \aleph_1$, under the coordinate
embedding assumption. Then $Y$ admits $c_0$ as a quotient via an explicit
bounded linear surjection $T: Y \to c_0$. The kernel $\ker(T)$ is an
$\mathscr{L}_{\infty,\lambda}$-space of density $\kappa$.
\end{theorem}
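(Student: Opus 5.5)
The plan is to produce $T$ in the form $Ty=(f_n(y))_{n\ge1}$ for a suitable sequence $(f_n)\subset Y^*$. By standard duality, such a map is a bounded surjection onto $c_0$ exactly when three conditions hold. First, $(f_n)$ is bounded. Second, $(f_n)$ is weak$^*$-null, so that $Ty\in c_0$. Third, $(f_n)$ satisfies a lower $\ell_1$-estimate $\|\sum a_nf_n\|\ge\delta\sum|a_n|$, because $T^*e_n^*=f_n$ and $T$ is onto iff $T^*$ is an isomorphic embedding. The construction therefore reduces to producing such an $\ell_1$-basic, weak$^*$-null sequence from the Bourgain--Pisier (Lopez-Abad) structure of $Y$.

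To do this, I will use the coordinate embedding assumption, which I read as follows. $Y$ is the closure of an increasing union of finite-dimensional pieces $Y_\gamma\cong\ell_\infty^{n_\gamma}$, $\gamma<\kappa$. Each piece carries uniformly bounded extension operators $i_\gamma$ and restriction/projection maps $P_\gamma$ with $\|i_\gamma\|,\|P_\gamma\|\le\lambda$. In addition, there is a countable family of \emph{new} coordinates $\gamma_1<\gamma_2<\cdots$ whose coordinate functionals $e^*_{\gamma_n}\circ P_{\gamma_n}$ do not depend on the base space $X$. I then set $f_n=e^*_{\gamma_n}\circ P_{\gamma_n}$ and verify the three conditions in order:
\begin{itemize}
\item Boundedness, $\|f_n\|\le\lambda$, is immediate.
\item For the lower $\ell_1$-estimate, given scalars $(a_n)$ with finite support I test against the vector $y=i_{\gamma_N}\big(\sum_n \overline{\operatorname{sgn}a_n}\,e_{\gamma_n}\big)$. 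It has norm at most $\lambda$ and satisfies $f_n(y)=\overline{\operatorname{sgn}a_n}$ up to the perturbations allowed by the construction, which gives $\delta\approx1/\lambda$.
\item Weak$^*$-nullness is where the assumption does the real work. I will show that for each $y$ in the dense union $\bigcup_\gamma i_\gamma(Y_\gamma)$, only finitely many $f_n(y)$ exceed any given $\varepsilon$. This is the nonseparable analogue of the Bourgain--Delbaen fact that the new coordinates of $y\in Y_\gamma$ at later levels are determined by evaluation functionals of geometrically decreasing weight. Boundedness of $(f_n)$ then extends weak$^*$-nullness from the dense union to all of $Y$.
\end{itemize}

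Once $T$ is a bounded surjection, I will identify the kernel. Let $K=\ker T$; it has density $\kappa$ because $Y/K\cong c_0$ is separable and $\operatorname{dens}Y=\kappa$. For the local structure I dualize the exact sequence $0\to K\to Y\to c_0\to0$ to get $0\to\ell_1\to Y^*\to K^*\to0$. Here $Y^*$ is an $\mathscr{L}_1$-space, and $T^*(\ell_1)$ is an $\mathscr{L}_1$-subspace of it. By the Lindenstrauss--Rosenthal theorem, the quotient of an $\mathscr{L}_1$-space by an $\mathscr{L}_1$-subspace is again $\mathscr{L}_1$. Hence $K^*$ is $\mathscr{L}_1$, and so $K$ is an $\mathscr{L}_\infty$-space. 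To get an explicit constant $\lambda'$ I would avoid this soft route and exhibit the local structure directly. For finite sets of indices I take the pieces $Y_\gamma\cap\ker T$. On these, $T$ restricts to a coordinate projection onto a finite block of the $e_{\gamma_n}$, so each such piece is an $\ell_\infty^m$ with extension constant controlled by $\lambda$.

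The main obstacle is the weak$^*$-nullness step. It is exactly where a counterexample without the assumption should live. If the distinguished coordinates have functionals that retain a fixed proportion of mass on an old coordinate, then the $f_n$ accumulate weak$^*$ at a nonzero functional, and $T$ fails to land in $c_0$. I would first try to prove the decay estimate $|f_n(y)|\le C\,\theta^{\,n-m}\|y\|$ for $y\in i_{\gamma_m}(Y_{\gamma_m})$ with some $\theta<1$, by induction on the level, mirroring the Bourgain--Delbaen estimate.
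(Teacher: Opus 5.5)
Your duality reformulation is correct, and your lower $\ell_1$-estimate is the dual form of Lemma~\ref{lem:c0equivalence}. If the $v_n$ are exactly biorthogonal to the $f_n$ and $\|\sum_n a_nv_n\|\le\lambda\max_n|a_n|$, then testing $\sum_n a_nf_n$ on $\sum_n \epsilon_nv_n$ with $\epsilon_n=\operatorname{sgn}a_n$ gives $\delta=1/\lambda$.

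The genuine gap is the step you flag as the main obstacle, $T(Y)\subseteq c_0$. You leave it unproved, and the mechanism you propose cannot work here, for two reasons.

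\textbf{Your model is not the Lopez-Abad structure.} An increasing union of finite-dimensional spaces indexed by ordinals $\gamma<\kappa$ has countable dimension, so its closure is separable. In fact $Y$ is a directed union over $[\kappa]^{<\omega}$. There are no canonical restriction maps $P_\gamma$, and each functional must be controlled on $j_{s,\infty}(G_s)$ for every finite $s$, including sets incomparable with the stage where it was defined.

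\textbf{There is no geometric decay.} An old vector $e\in Y_t$ reappears in the new block as $[(0,e)]=[(\nu_t^{-1}e,0)]$. Its non-free coordinates carry all of $e$ (their maximum is at least $\|e\|/\eta$), and nothing forces decay along later stages. Its free coordinates, by contrast, vanish identically.

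That exact vanishing is the missing idea, and it is precisely what $S_t=\operatorname{span}\{e_1,\ldots,e_{m_t}\}$ buys. Reading a free coordinate $b_{j_k}$ of $[(b,e)]$ at the singleton stage $\{\gamma_k\}$ is well defined and kills every $[(0,e)]$. Extend it to later pushouts by $[(b,e)]\mapsto\phi(b)+e^*(e)$, with $\phi=e^*\circ\nu_t$ on $S_t$ and $\phi=0$ on the free coordinates. This keeps the norm bounded and gives $e^*_{\gamma_k}\equiv0$ on $j_{s,\infty}(G_s)$ whenever $\gamma_k\notin s$ (Lemma~\ref{lem:general_vanishing}). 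So $T$ maps the dense set $\bigcup_sj_{s,\infty}(G_s)$ into $c_{00}$, and boundedness gives $T(Y)\subseteq c_0$. The same vanishing makes your biorthogonality ``up to perturbations'' exact.

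Your soft route to the kernel rests on a misquoted theorem. Lindenstrauss--Rosenthal says that the quotient of an $\mathscr{L}_\infty$-space by an $\mathscr{L}_\infty$-subspace is $\mathscr{L}_\infty$. The $\mathscr{L}_1$ counterpart concerns kernels (if $Z$ and $Z/W$ are $\mathscr{L}_1$, so is $W$), not quotients. Indeed, take Bourgain's uncomplemented subspace $F\cong\ell_1$ of $\ell_1$; then $\ell_1/F$ is not $\mathscr{L}_1$. Otherwise $(\ell_1/F)^*$ would be injective, hence complemented in $\ell_\infty$, so the dual sequence would split. Since $F$ is complemented in $F^{**}$, this would make $F$ complemented in $\ell_1$.

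In your setting the same reasoning shows that $K^*$ is $\mathscr{L}_1$ if and only if $T^*(\ell_1)$ is complemented in $Y^*$. That is a property of the particular $T$, not a consequence of $Y$ being $\mathscr{L}_\infty$. So only your direct route is available, and it needs two facts you do not establish:
\begin{enumerate}
\item $T$ restricted to $j_{s,\infty}(G_s)$ is \emph{exactly} the coordinate map $[(x,0)]\mapsto(x_{j_k})_{\gamma_k\in s}$. This is exact vanishing again, and it is incompatible with a mere decay estimate. It is what makes $K_s=j_{s,\infty}(G_s)\cap K$ $\lambda$-isomorphic to $\ell_\infty^{n_s-r_s}$.
\item $\bigcup_sK_s$ is dense in $K$. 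The paper gets this in Lemma~\ref{lem:Ks_structure}(3) by approximating $y\in K$ by some $[(x,0)]\in G_s$ and zeroing the finitely many coordinates $x_{j_k}$, each of which is small because $e^*_{\gamma_k}(y)=0$.
\end{enumerate}
Your computation $\operatorname{dens}K=\kappa$ is fine and matches the paper's.
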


\begin{remark}
Throughout this paper, by a \emph{nonseparable Bourgain-Pisier
$\mathscr{L}_\infty$-space} we mean a space constructed via the method of
Lopez-Abad \cite[Theorem~3.1]{Lo13}, which generalizes the original
Bourgain-Pisier construction \cite{BP83} to Banach spaces of arbitrary
uncountable density. The coordinate embedding assumption, introduced in
Section~\ref{sec:coords_realization}, is an additional condition imposed
on the Lopez-Abad construction.
\end{remark}

Theorem~\ref{thm:general} is the central result: it applies to any
$\mathscr{L}_\infty$-space with the right quotient structure, independently
of any specific construction. Corollary~\ref{cor:lopezabad} shows that the
Lopez-Abad spaces are a natural and nontrivial class to which this criterion
applies, since Lopez-Abad's theorem guarantees that $Y/X$ is
infinite-dimensional with the Schur property. Theorem~\ref{thm:explicit}
then provides an explicit quotient map with full structural analysis under
a coordinate assumption on the Lopez-Abad construction. We further prove
in Section~\ref{sec:necessity} that the coordinate embedding assumption is
necessary for the explicit construction as structured.

\subsection{Context and Prior Work}

That every separable infinite-dimensional $\mathscr{L}_\infty$-space admits
$c_0$ as a quotient was first shown by Alspach and Benyamini \cite{AB79}.
Argyros, Gasparis, and Motakis \cite{AM16} strengthened this by proving
that the quotient map can be chosen so that the kernel is itself an
$\mathscr{L}_\infty$-space. Their proof passes through a representation
theorem showing every separable $\mathscr{L}_\infty$-space is isomorphic
to a Bourgain-Delbaen space, in which coordinate functionals and a uniform
FDD upper bound are available by construction. In the nonseparable setting,
no Bourgain-Delbaen representation is known to exist, and the compactness
arguments underlying their approach fail for uncountable index sets. Our
work addresses the complementary nonseparable setting.

\subsection{Overview of Results}

Theorem~\ref{thm:general} (Section~\ref{sec:general}) is proved by showing
that any $\mathscr{L}_\infty$-space $Y$ satisfying the criterion cannot be
Grothendieck; since $Y/X$ is infinite-dimensional with the Schur property,
and $Y$ being Grothendieck would force $Y/X$ to be Grothendieck (via the
fact that $Q^*: (Y/X)^* \to Y^*$ is a weak$^*$-to-weak$^*$ embedding),
this contradicts the classical result that no infinite-dimensional space
can be simultaneously Schur and Grothendieck. The characterization of
Grothendieck $\mathscr{L}_\infty$-spaces then yields the $c_0$ quotient.

Theorem~\ref{thm:explicit} (Sections~\ref{sec:coords}--\ref{sec:mainexplicit})
operates within a \emph{coordinate embedding realization} of the class of
interest, in which the isomorphism $\nu_t: S_t \to Y_t$ at each stage maps
a basis of $Y_t$ to standard basis vectors, so that $S_t$ becomes a
coordinate subspace of $\ell_\infty^{n_t}$. This enables well-defined
coordinate functionals on the pushout quotient and the disjoint-support
structure needed for the $\ell^\infty$-equivalence. In
Section~\ref{sec:necessity} we prove this assumption is necessary: without
it, both the well-definedness of coordinate functionals and the
$\ell^\infty$-equivalence upper bound fail by explicit counterexample.

Our contributions are as follows.
\begin{enumerate}
    \item A general nonconstructive resolution of Mazur's problem for all
    $\mathscr{L}_{\infty,\lambda}$-spaces $Y$ admitting an
    infinite-dimensional quotient with the Schur property
    (Theorem~\ref{thm:general} and Corollary~\ref{cor:lopezabad}).
    \item An explicit bounded surjection $T: Y \to c_0$ for the coordinate
    embedding realization, where $Y$ is a nonseparable Bourgain-Pisier
    $\mathscr{L}_{\infty,\lambda}$-space of arbitrary uncountable density
    $\kappa$ (Theorem~\ref{thm:explicit}).
    \item Analysis of the kernel structure, showing $\ker(T)$ is an
    $\mathscr{L}_{\infty,\lambda}$-space of density $\kappa$
    (Section~\ref{sec:kernel}).
    \item Proof that the coordinate embedding assumption is necessary for
    the explicit construction, with counterexamples showing failure of
    well-definedness and the $\ell^\infty$-equivalence without it
    (Section~\ref{sec:necessity}).
    \item Identification of the coordinate embedding as the precise
    nonseparable substitute for the Bourgain-Delbaen FDD structure, with
    comparison to the separable theory of \cite{AM16}
    (Remark~\ref{rmk:agm_comparison}).
\end{enumerate}

\section{The Lopez-Abad Construction}
\label{sec:lopezabad}

We recall the construction from \cite{Lo13} in sufficient detail for the
proofs that follow. The goal is to embed a given Banach space $X$
isometrically into an $\mathscr{L}_\infty$-space $Y$ so that the quotient
$Y/X$ has the Radon--Nikod\'ym and Schur properties. The construction
generalizes the original Bourgain--Pisier method \cite{BP83} from separable
to arbitrary Banach spaces by replacing the linear inductive system of the
separable case with a directed system indexed by all finite subsets of the
density of $X$.

\subsection{The Kisliakov Pushout}

The fundamental local step in the construction is a method due to Kisliakov
\cite{Ki76}; we follow the presentation in \cite{Lo13}, with full details
in \cite{BP83}. Given Banach spaces $S \subseteq B$ and $E$, and a bounded
operator $u: S \to E$ with $\|u\| \leq \eta \leq 1$, one forms the
\emph{pushout}
\begin{equation}
    (B \oplus_1 E)/N_u, \qquad N_u = \{(s, -u(s)) : s \in S\},
\end{equation}
where $B \oplus_1 E$ denotes the direct sum with norm
$\|(b,e)\| = \|b\| + \|e\|$. This yields a canonical isometric embedding
$i_E$ and a canonical contractive embedding $i_B$:
\[
i_E: E \to (B \oplus_1 E)/N_u, \quad i_E(e) = [(0,e)],
\]
\[
i_B: B \to (B \oplus_1 E)/N_u, \quad i_B(b) = [(b,0)].
\]
The embedding $i_E$ is isometric: for any $e \in E$ and $\xi \in S$,
$\|\xi\| + \|e + u(\xi)\| \geq (1-\eta)\|\xi\| + \|e\| \geq \|e\|$,
so $\|i_E(e)\| = \|e\|$. The embedding $i_B$ is contractive but not
isometric in general: if $b \in S$ then
$\|i_B(b)\| \leq \|u(b)\| \leq \eta\|b\| < \|b\|$.
The pushout identifies the two copies of $S$ via $u$: one can think of
$(B \oplus_1 E)/N_u$ as gluing $B$ and $E$ together along $S$.

An isometric embedding $j: E \to E_1$ is called \emph{$\eta$-admissible}
if it arises from such a pushout diagram; see
\cite[Definition~2.3, Proposition~2.4]{Lo13} for full details. The key
metric property of $\eta$-admissible embeddings \cite[equation~(6)]{Lo13}
is: if $x_0, \ldots, x_n \in E_1$ satisfy $x_0 + \cdots + x_n \in j(E)$,
then
\begin{equation}
    \sum_{i=0}^n \|x_i\| \geq \Bigl\|\sum_{i=0}^n x_i\Bigr\|
    + (1-\eta)\sum_{i=0}^n \|q(x_i)\|,
\end{equation}
where $q: E_1 \to E_1/j(E)$ is the quotient map. This inequality ultimately
forces the quotient $Y/X$ to have the Radon--Nikod\'ym and Schur properties.

\subsection{The Directed System}

Fix an infinite-dimensional Banach space $X$ of density $\kappa \geq \aleph_1$,
a dense subset $D = \{d_\alpha : \alpha < \kappa\}$ of $X$, and constants
$\lambda > 1$ and $\eta < 1$ with $\lambda\eta < 1$. We write
$[\kappa]^{<\omega}$ for the collection of all finite subsets of $\kappa$,
directed by inclusion. For each $s \in [\kappa]^{<\omega}$, let
$X_s = \operatorname{span}\{d_\alpha\}_{\alpha \in s}$.

The construction builds a directed system of Banach spaces
$(E_s, j_{s,t})_{s \subseteq t}$ together with distinguished
finite-dimensional subspaces $G_s \subseteq E_s$ satisfying
\cite[Lemma~3.4]{Lo13}:
\begin{enumerate}
    \item $E_\emptyset = X$.
    \item Each $j_{s,t}: E_s \to E_t$ is an $\eta$-admissible isometric
    embedding with $j_{s,t}(E_s)$ of finite codimension in $E_t$.
    \item Each $G_s$ is finite-dimensional and $\lambda$-isomorphic to
    $\ell_\infty^{\dim G_s}$ \cite[Lemma~3.4, Claim~5(a)]{Lo13}.
    \item $j_{t,s}(G_t) \subseteq G_s$ for all $t \subseteq s$, and
    $j_{\emptyset,s}(X_s) \subseteq G_s$.
\end{enumerate}
For each $s \in [\kappa]^{<\omega}$, we write $j_{s,\infty}: E_s \to Y$
for the canonical isometric embedding of $E_s$ into the completion of the
inductive limit, defined as the limit of the maps $(j_{s,t})_{t \supseteq s}$.
The space $Y$ is then defined as the closure of
$\bigcup_{s \in [\kappa]^{<\omega}} j_{s,\infty}(G_s)$ inside this
completion.

\subsection{The Anti-Lexicographic Order and Local Systems}

The directed system is not built all at once. Instead, for each finite $s$,
one first constructs a \emph{finite local system} $(E_t^{(s)})_{t \subseteq s}$
indexed by subsets of $s$, and then assembles these local systems coherently.
The ordering on subsets of $s$ used is the \emph{anti-lexicographic order}
$\prec$ \cite[Definition~3.3]{Lo13}: $\emptyset \prec t$ for all nonempty
$t$, and for nonempty $t, u$,
\[
t \prec u \iff \max t < \max u, \text{ or } \max t = \max u
\text{ and } t \setminus \{\max t\} \prec u \setminus \{\max u\}.
\]
This is a well-ordering on $[\kappa]^{<\omega}$ extending inclusion. We write
$\bar{t}$ for the immediate $\prec$-predecessor of $t$ among subsets of $t$.

The local construction at each nonempty stage $t$ proceeds as follows.
At stage $t$, the space already built is $E_{\bar{t}}^{(s)}$. One defines:
\begin{itemize}
    \item A finite-dimensional subspace $Y_t \subseteq E_{\bar{t}}^{(s)}$,
    defined as the span of all previously accumulated $G$-spaces and $X_t$:
    \begin{equation}\label{eq:Yt}
        Y_t := \operatorname{span}\Bigl(
        \bigcup_{\substack{u \prec t,\, \bar{u} \subseteq t}}
        j_{u,\bar{t}}^{(s)}(G_u^{(s)})
        \cup j_{\emptyset,\bar{t}}^{(s)}(X_t)\Bigr)
        \subseteq E_{\bar{t}}^{(s)}.
    \end{equation}
    \item An integer $n_t \in \mathbb{N}$ and an isomorphism
    $\nu_t: S_t \to Y_t \subseteq E_{\bar{t}}^{(s)}$, where
    $S_t \subsetneq \ell_\infty^{n_t}$ is a proper subspace with
    $\|\nu_t\| \leq \eta$ and $\|\nu_t^{-1}\| \leq \lambda$
    \cite[Lemma~3.5, property~(b)]{Lo13}.
    \item The next space in the local system:
    \begin{equation}
        E_t^{(s)} = \bigl(\ell_\infty^{n_t} \oplus_1
        E_{\bar{t}}^{(s)}\bigr)/N_{\nu_t},
        \qquad N_{\nu_t} = \{(\xi, -\nu_t(\xi)) : \xi \in S_t\},
    \end{equation}
    with canonical embedding
    $j_{\bar{t},t}^{(s)}: E_{\bar{t}}^{(s)} \to E_t^{(s)}$,
    $j_{\bar{t},t}^{(s)}(e) = [(0,e)]$.
    \item The distinguished subspace:
    \begin{equation}
        G_t^{(s)} = \{[(x, 0)] : x \in \ell_\infty^{n_t}\}
        \subseteq E_t^{(s)}.
    \end{equation}
\end{itemize}
The subspace $G_t^{(s)}$ is isomorphic to $\ell_\infty^{n_t}$ via
$\phi_t: G_t^{(s)} \to \ell_\infty^{n_t}$ defined by
$\phi_t([(x,0)]) = x$, which satisfies
$\lambda^{-1}\|x\| \leq \|[(x,0)]\| \leq \|x\|$
\cite[Lemma~3.5, property~(D)]{Lo13}, so $\|\phi_t\| \leq 1$ and
$\|\phi_t^{-1}\| \leq \lambda$.

The intuition is as follows. At each stage $t$, the construction extends
$E_{\bar{t}}^{(s)}$ by attaching a new copy of $\ell_\infty^{n_t}$ via a
pushout. The subspace $S_t \subsetneq \ell_\infty^{n_t}$ is identified with
the accumulated finite-dimensional subspace $Y_t \subseteq E_{\bar{t}}^{(s)}$
via $\nu_t$, while the remaining free directions in $\ell_\infty^{n_t}$
become the new piece of $G_t^{(s)}$ not already present in the previous
stage.

\subsection{Coherence and the Global System}

The local systems $(E_t^{(s)})_{t \subseteq s}$ for different $s$ are
assembled via transition maps $k_u^{(t,s)}: E_u^{(t)} \to E_u^{(s)}$ for
$u \subseteq t \subseteq s$, which are $\eta$-admissible isometric embeddings
\cite[Lemma~3.5, property~(B)]{Lo13}. Setting $y = 0$ in
\cite[Lemma~3.5, equation~(19)]{Lo13}, these maps preserve the
$\ell_\infty^{n_u}$ component exactly:
\begin{equation}\label{eq:transition}
    k_u^{(t,s)}\bigl([(x,0)] + N_u^{(t)}\bigr) = [(x,0)] + N_u^{(s)},
    \quad x \in \ell_\infty^{n_u}.
\end{equation}
In particular, $k_u^{(t,s)}$ maps $G_u^{(t)}$ isometrically onto
$G_u^{(s)}$.

The global directed system is defined by $E_s := E_s^{(s)}$,
$G_s := G_s^{(s)}$, and
\begin{equation}
    j_{t,s} := j_{t,s}^{(s)} \circ k_t^{(t,s)}: E_t \to E_s,
\end{equation}
for $t \subseteq s$ \cite[Lemma~3.4, equation~(15)]{Lo13}. In particular,
the canonical maps $j_{s,\infty}: E_s \to Y$ satisfy
$j_{s,\infty} = j_{t,\infty} \circ j_{s,t}$ for all $s \subseteq t$
\cite[Lemma~3.4]{Lo13}.

\subsection{Summary: Theorem of Lopez-Abad}

\begin{theorem}[\cite{Lo13}, Theorem~3.1]
\label{thm:lopezabad}
Let $X$ be an infinite-dimensional Banach space of density
$\kappa \geq \aleph_1$. There exists an $\mathscr{L}_{\infty,\lambda}$-space
$Y$ of density $\kappa$ such that:
\begin{enumerate}
    \item $X$ isometrically embeds into $Y$.
    \item $Y/X$ has the Radon--Nikod\'ym and Schur properties.
    \item $Y = \overline{\bigcup_{s \in [\kappa]^{<\omega}} j_{s,\infty}(G_s)}$
    inside the completion of the inductive limit of $(E_s, j_{s,t})$, where
    each $G_s$ is finite-dimensional and $\lambda$-isomorphic to
    $\ell_\infty^{\dim G_s}$.
\end{enumerate}
\end{theorem}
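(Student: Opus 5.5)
The statement is Lopez-Abad's theorem, and I would prove it by assembling the properties (1)--(4) of the directed system $(E_s,j_{s,t})$ together with the metric inequality (2) for $\eta$-admissible embeddings. There are three parts: the $\mathscr{L}_{\infty,\lambda}$ structure and density, the isometric copy of $X$, and the Radon--Nikod\'ym and Schur properties of $Y/X$.

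\emph{$\mathscr{L}_{\infty,\lambda}$ structure and density.} By property (4), $j_{t,s}(G_t)\subseteq G_s$ for $t\subseteq s$. Combined with $j_{t,\infty}=j_{s,\infty}\circ j_{t,s}$, this makes the family $\{j_{s,\infty}(G_s)\}_{s\in[\kappa]^{<\omega}}$ upward directed by inclusion. Each $j_{s,\infty}$ is isometric, so each $j_{s,\infty}(G_s)$ is $\lambda$-isomorphic to $\ell_\infty^{\dim G_s}$ by property (3). Since $Y$ is by definition the closure of this directed union, the standard local criterion applies. Given a finite-dimensional $F\subseteq Y$ and $\varepsilon>0$, I perturb a basis of $F$ into some single $j_{s,\infty}(G_s)$ (possible by directedness) and use a small-perturbation lemma. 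This shows $Y$ is $\mathscr{L}_{\infty,\lambda'}$ for every $\lambda'>\lambda$, which is the usual meaning of $\mathscr{L}_{\infty,\lambda}$ here. For the density, $Y$ is the closure of a union of $\kappa$ finite-dimensional spaces, so $\operatorname{dens}Y\le\kappa$. The reverse inequality follows once $X\subseteq Y$ is established.

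\emph{Isometric copy of $X$.} The map $j_{\emptyset,\infty}:X=E_\emptyset\to$ (completion) is isometric. By property (4), $j_{\emptyset,\infty}(X_s)=j_{s,\infty}(j_{\emptyset,s}(X_s))\subseteq j_{s,\infty}(G_s)\subseteq Y$. Since $\bigcup_s X_s\supseteq D$ is dense in $X$ and $Y$ is closed, it follows that $j_{\emptyset,\infty}(X)\subseteq Y$.

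\emph{Radon--Nikod\'ym and Schur properties of $Y/X$; this is the main obstacle.} I would first globalize inequality (2). I would use that a composition of $\eta$-admissible embeddings along a chain again satisfies (2) with the same $\eta$ (the transitivity statement around \cite[Proposition~2.4]{Lo13}). Each $j_{\emptyset,s}$ is such a composition along the anti-lexicographic steps. By isometry, (2) then holds for $j_{\emptyset,\infty}$ and vectors in $\bigcup_s j_{s,\infty}(E_s)$. By density and continuity of both sides, it extends to arbitrary $x_0,\dots,x_n\in Y$ with $\sum x_i\in X$.

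The key consequence I would extract is an $\ell_1$-type lower estimate in the quotient. Suppose $(\xi_k)$ is a normalized sequence in $Y/X$ and we pick lifts $x_k$. Then (2), applied to differences of nearly optimal lifts, forces norm control on blocks that, for a weakly null sequence, is incompatible with $\|\xi_k\|=1$. I would run Bourgain--Pisier's argument: take $y_k\in Y$ with $q(y_k)=\xi_k$ weakly null, approximate each $y_k$ in a finite stage $j_{s_k,\infty}(E_{s_k})$, and use the pushout structure to bound $\|q(y_k)\|$ by $(1-\eta)^{-1}$ times a defect that tends to zero.

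For the Radon--Nikod\'ym property, the same inequality applied to martingale differences $x_0,\dots,x_n$ shows that a bounded $Y/X$-valued martingale has summable quotient increments up to the constant $(1-\eta)^{-1}$, and hence converges. This is the step I expect to be most delicate. If the direct route stalls, I would instead cite that Kisliakov/Bourgain--Pisier quotients inherit an $\ell_1$-sum structure, $Y/X\hookrightarrow$ an $\ell_1$-sum of finite-dimensional spaces, up to $(1-\eta)^{-1}$. Both properties pass to subspaces of such sums, which would finish the argument as in \cite[Theorem~3.1]{Lo13}.
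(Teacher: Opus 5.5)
Your treatment of the $\mathscr{L}_{\infty,\lambda}$ structure, the density, and the isometric copy of $X$ is sound. It is slightly more careful than the paper's one-line remark, since you note that property (4) makes the union directed. The gap is in the Radon--Nikod\'ym and Schur part, and it is structural: your sketch uses only inequality (2) plus density, and that cannot suffice.

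To see this, take $B=c_0$, $S=\{0\}$, $u=0$. The pushout is $E\oplus_1 c_0$, and the embedding $E\hookrightarrow E\oplus_1 c_0$ satisfies (2) even with $\eta=0$, hence for every $\eta$. Every vector lies in a finite stage, and yet the quotient is $c_0$, which has neither property.

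The ingredient you never use is finite-dimensionality. By property (2) of the directed system, the quotients $E_s/j_{\emptyset,s}(X)$ are finite-dimensional, and the maps induced between them are again $\eta$-admissible. So $Y/X$ sits inside the completion of an inductive limit of finite-dimensional spaces with $\eta$-admissible connecting maps. That is exactly the hypothesis of \cite[Theorem~2.5]{Lo13}, the general-density version of \cite[Theorem~1.6]{BP83}, which is what the paper invokes. Its proof genuinely depends on that finite-dimensionality.

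The individual steps also break down.
\begin{itemize}
\item \textbf{RNP.} Inequality (2) only applies to families whose sum lies in $X$, which martingale differences are not. The conclusion you aim for is also false in every nonzero space. Stop the scalar martingale $\sum_{k\le n} r_k/k$ (with $r_k$ Rademacher) when it leaves $[-10,10]$. This gives a uniformly bounded martingale with $\sum_k |d_k|=\infty$ with positive probability, and multiplying by a fixed vector transfers this to $Y/X$. So ``summable increments, hence convergence'' is not a viable mechanism.
\item \textbf{Schur.} A weakly null sequence in $Y/X$ need not admit weakly null lifts to $Y$, and you never identify the ``defect that tends to zero''.
\item \textbf{Fallback.} The claim that $Y/X$ embeds $(1-\eta)^{-1}$-isomorphically into an $\ell_1$-sum of finite-dimensional spaces is not a result of \cite{Lo13} or \cite{BP83} that you can cite, and you give no argument for it.
\end{itemize}

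To close the gap, you have two options. One is to verify the hypotheses of \cite[Theorem~2.5]{Lo13} and apply it: the quotients $E_s/j_{\emptyset,s}(X)$ are finite-dimensional, the induced maps are $\eta$-admissible, and $Y/X$ is a subspace of the completion of their limit. The other is to reproduce the Bourgain--Pisier argument in a form that actually exploits the finite-dimensionality of these pieces.
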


The $\mathscr{L}_\infty$ property of $Y$ follows from property (3): $Y$ is
the closure of a union of finite-dimensional spaces each $\lambda$-isomorphic
to a finite-dimensional $\ell_\infty$, which is exactly the definition of an
$\mathscr{L}_{\infty,\lambda}$-space. The Schur and Radon--Nikod\'ym
properties of $Y/X$ follow from \cite[Theorem~2.5]{Lo13}, which generalizes
\cite[Theorem~1.6]{BP83}: since the quotient spaces $E_s/j_{\emptyset,s}(X)$
are finite-dimensional and the induced maps between them are $\eta$-admissible,
the completion of their inductive limit inherits these properties.

\section{General Existence Theorem}
\label{sec:general}

We now prove Theorem~\ref{thm:general}, which establishes that every
$\mathscr{L}_\infty$-space satisfying the criterion admits $c_0$ as a
quotient.

\subsection{Background Definitions}

A Banach space $Z$ is called \emph{Grothendieck} if every weak$^*$ convergent
sequence in $Z^*$ is weakly convergent. Classical examples of Grothendieck
spaces include $\ell^\infty$, $L_\infty(\mu)$ for localizable $\mu$, and
$H^\infty$. A Banach space $Z$ has the \emph{Schur property} if every weakly
convergent sequence in $Z$ is norm convergent; $\ell_1$ and $\ell_1(\Gamma)$
for any index set $\Gamma$ are the canonical examples.

\subsection{Key Ingredients}

\begin{theorem}[{\cite{GK21,Jo75,Ni75}}]
\label{thm:schurnotgrothendieck}
No infinite-dimensional Banach space can simultaneously possess the Schur
property and the Grothendieck property.
\end{theorem}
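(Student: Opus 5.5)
The plan is to argue by contradiction. Let $Z$ be an infinite-dimensional Banach space that is both Schur and Grothendieck. The first move is to show that $Z$ must then be reflexive. After that, reflexivity will be played off against the Schur property, which forces $Z$ to be finite-dimensional.

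\emph{Step 1: Grothendieck gives weak compactness of the unit ball of $Z^*$.} Let $(f_n)$ be a bounded sequence in $Z^*$. If $Z$ were separable, $B_{Z^*}$ would be weak$^*$ sequentially compact. In general, pass to the closed separable subspace $Z_0$ spanned by a suitable countable set, or use Rosenthal's $\ell_1$ theorem together with Schur (next step). Either way the goal is a weak$^*$ convergent subsequence of $(f_n)$. The Grothendieck property then makes that subsequence weakly convergent. So whenever bounded sequences have weak$^*$ convergent subsequences, $B_{Z^*}$ is weakly sequentially compact, hence weakly compact by Eberlein--\v{S}mulian, and $Z^*$, and therefore $Z$, is reflexive.

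\emph{Step 2: removing the separability issue (the main obstacle).} I will avoid needing weak$^*$ sequential compactness of $B_{Z^*}$ at all, and instead argue inside $Z$. Since $Z$ is infinite-dimensional, take a normalized sequence $(z_n)$ in $Z$. By Rosenthal's $\ell_1$ theorem, either $(z_n)$ has a weakly Cauchy subsequence or a subsequence equivalent to the unit vector basis of $\ell_1$. A Schur space is weakly sequentially complete, and weakly Cauchy sequences are norm Cauchy there. So we may choose the $z_n$ (e.g.\ $1$-separated, via Riesz's lemma) so that the first alternative is impossible; hence $Z$ contains a copy $W$ of $\ell_1$.

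The difficulty is that the Grothendieck property is not inherited by subspaces, so $W\cong \ell_1$ does not by itself contradict anything. Instead, pass to the biorthogonal-type functionals. The map $W\to\ell_1$ composed with the unit-vector coordinate functionals gives functionals $e_n^*$ on $W$ with $e_n^*\to0$ weak$^*$ on $W$. Extend them by Hahn--Banach to a bounded sequence $(f_n)$ in $Z^*$. These extensions need not be weak$^*$ null on all of $Z$. The fix I will attempt is to use a Schur-space argument (for Schur spaces, bounded sets in the dual admit weak$^*$ convergent subsequences on the relevant separable part) together with a diagonal subsequence, so as to obtain $g_k=f_{n_{2k}}-f_{n_{2k+1}}$ weak$^*$ null in $Z^*$ while $g_k(w_{n_{2k}})=1$. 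By Grothendieck, $g_k\to0$ weakly in $Z^*$. Restricting to $W$ then gives a weakly null sequence in $W^*=\ell_\infty$, which must also be weakly null when tested against the functional $\limsup$-type element of $\ell_\infty^*$ that detects the $\pm1$ pattern. This yields the contradiction.

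If this diagonal argument proves too delicate, the fallback is to cite \cite{GK21,Jo75,Ni75} as stated. The essential content there is the Josefson--Nissenzweig theorem: every infinite-dimensional $Z$ has a weak$^*$ null normalized sequence $(f_n)$ in $Z^*$. By Grothendieck it is weakly null. On the other hand, a Schur space cannot have such a sequence: weak nullity of $(f_n)$ plus the Schur structure of $Z$ would make $(f_n)$ converge uniformly on weakly compact subsets of $Z$, which are norm compact. This is compatible with $\|f_n\|=1$ only via a gliding-hump contradiction against the $\ell_1$ copy found in Step 2, so closing that gap is where the real work lies.
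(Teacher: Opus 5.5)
Your proposal does not prove the statement, and the gap is the nonseparable case.

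\textbf{Where the proposal breaks.} Step~1 is sound when $B_{Z^*}$ is weak$^*$ sequentially compact (e.g.\ $Z$ separable): Grothendieck then gives reflexivity, and Schur gives finite dimension. In general, though, that compactness is exactly what is at issue. By your own argument, a non-reflexive Grothendieck space never has a weak$^*$ sequentially compact dual ball. Weak$^*$ convergence ``on a separable part'' $Z_0$ is only convergence against $Z_0$, and the Grothendieck property of $Z$ says nothing about that, since it is not inherited by subspaces.

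Step~2 gives no mechanism for making Hahn--Banach extensions weak$^*$ null on all of a nonseparable $Z$. Even granting $g_k\to 0$ weak$^*$, the final contradiction is illusory. The restriction $g_k|_W=e^*_{n_{2k}}-e^*_{n_{2k+1}}$ is a disjointly supported $\pm1$ sequence spanning a copy of $c_0$ in $W^*\cong\ell_\infty$. It is weakly null there, because every $\phi\in\ell_\infty^*$ restricts to an element of $\ell_1$ on $c_0$; nothing detects the $\pm1$ pattern.

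A real contradiction would need weak$^*$ null extensions to $Z$ of a pattern that is \emph{not} weakly null in $\ell_\infty$, such as the tail functionals $\mathbf{1}_{[n,\infty)}$. For $\ell_1\subset\ell_\infty$ no such extensions exist, because $\ell_\infty$ is Grothendieck. So the Schur property would have to be used globally, and your proposal does not say how. Your fallback claim is also false: a Schur space can have a normalized weak$^*$ null, weakly null sequence in its dual. In $\ell_1$, the unit vectors $e_n^*\in\ell_\infty$ are such a sequence.

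\textbf{Comparison with the paper.} The paper follows essentially your fallback route. Grothendieck makes every $T:Z\to c_0$ weakly compact, and Schur makes it completely continuous. The paper then asserts that weakly compact plus completely continuous implies compact, and notes that the Josefson--Nissenzweig operator $z\mapsto(z_n^*(z))_n$ is not compact.

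That middle implication is false in general. The formal inclusion $\ell_1\to c_0$ factors through $\ell_2$, so it is weakly compact. It is completely continuous because $\ell_1$ is Schur. It is not compact. It is exactly the Josefson--Nissenzweig operator of $\ell_1$ built from $(e_n^*)$. The implication does hold when the domain contains no copy of $\ell_1$ (by Rosenthal's theorem), but that is never the case for an infinite-dimensional Schur space.

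So the step you identified as ``where the real work lies'' is not supplied by the paper's argument either. What is needed is a weak$^*$ null sequence in $Z^*$ that is not weakly null, equivalently a non-weakly-compact operator $Z\to c_0$. The Josefson--Nissenzweig theorem alone only produces some normalized weak$^*$ null sequence, and that sequence may well be weakly null.
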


\begin{proof}
Suppose for contradiction that $Z$ is an infinite-dimensional Banach
space with both the Schur and Grothendieck properties.

\textbf{Step 1: Every bounded operator $T: Z \to c_0$ is compact.}
By \cite[Theorem~3.1.1(2)]{GK21}, $Z$ is Grothendieck if and only if
every bounded operator $T: Z \to c_0$ is weakly compact. Since $Z$ has the Schur property, every bounded operator from
$Z$ is completely continuous: if $x_n \xrightarrow{w} 0$ in $Z$, then
$\|x_n\| \to 0$ by Schur, so $\|Tx_n\| \to 0$. A weakly compact and
completely continuous operator is compact. Hence every bounded operator
$T: Z \to c_0$ is compact.

\textbf{Step 2: Construction of a non-compact operator $T: Z \to c_0$.}
Since $Z$ is infinite-dimensional, by the Josefson--Nissenzweig theorem
\cite{Jo75,Ni75} there exists a sequence $(z_n^*)$ in $Z^*$ with
$\|z_n^*\| = 1$ for all $n$ and $z_n^* \xrightarrow{w^*} 0$. Define
$T: Z \to c_0$ by $T(z) = (z_n^*(z))_{n=1}^\infty$.
\textit{Well-defined}: since $z_n^* \xrightarrow{w^*} 0$, we have
$z_n^*(z) \to 0$ for every fixed $z \in Z$, so $T(z) \in c_0$.
\textit{Bounded}: $\|T(z)\|_{c_0} = \sup_n |z_n^*(z)| \leq
\sup_n \|z_n^*\| \cdot \|z\| = \|z\|$, so $\|T\| \leq 1$.

\textbf{Step 3: Contradiction.}
If $T$ were compact, then $T^*: \ell_1 \to Z^*$ would also be compact.
Since $T^*(e_n) = z_n^*$, the sequence $(z_n^*)$ would have a
norm-convergent subsequence $z_{n_k}^* \xrightarrow{\|\cdot\|} y^*$.
But norm convergence implies weak$^*$ convergence, so $y^* = 0$, giving
$\|z_{n_k}^*\| \to 0$, contradicting $\|z_n^*\| = 1$ for all $n$.
Hence $T$ is not compact, contradicting Step~1.
\end{proof}

\begin{theorem}[{\cite[Proposition~3.1.8]{GK21}}]
\label{thm:linftygrothendieck}
An $\mathscr{L}_\infty$-space $Z$ is Grothendieck if and only if it has no
quotient isomorphic to $c_0$. Equivalently, $Z$ fails to be Grothendieck
if and only if $Z$ has a quotient isomorphic to $c_0$. This equivalence
holds for spaces of arbitrary (including uncountable) density.
\end{theorem}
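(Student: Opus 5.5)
The plan is to prove the two implications separately. The direction ``quotient $c_0$ $\Rightarrow$ not Grothendieck'' holds for every Banach space. The converse is where the $\mathscr{L}_\infty$ hypothesis is used, through the structure of the dual $Z^*$. Nothing in the argument depends on separability, so it covers arbitrary density.

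\emph{Easy direction.} Let $Q: Z \to c_0$ be a bounded surjection. By the open mapping theorem, $Q^*: \ell_1 \to Z^*$ is an isomorphic embedding, and it is weak$^*$-to-weak$^*$ continuous. The unit vectors $e_n^* \in \ell_1 = c_0^*$ are weak$^*$ null but not weakly null, since they pair to $1$ with $(1,1,\dots) \in \ell_\infty = \ell_1^*$. Hence $(Q^*e_n^*)$ is weak$^*$ null in $Z^*$. Suppose it were weakly null. The weak topology of $\ell_1$ coincides with the restriction of the weak topology of $Z^*$ through the embedding $Q^*$, because every functional on $Q^*(\ell_1)$ extends by Hahn--Banach. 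So $(e_n^*)$ would then be weakly null in $\ell_1$, a contradiction. Therefore $Z$ is not Grothendieck.

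\emph{Hard direction.} Suppose $Z$ is an $\mathscr{L}_\infty$-space that is not Grothendieck. Then there is a weak$^*$ null sequence $(z_n^*)$ in $Z^*$ that is not weakly null, and it is bounded by the uniform boundedness principle. Pass to a subsequence and choose $z^{**} \in Z^{**}$ and $\varepsilon > 0$ with $|z^{**}(z_n^*)| \geq \varepsilon$ for all $n$.

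I then invoke the Lindenstrauss--Pełczyński/Lindenstrauss--Rosenthal theorem: the dual of an $\mathscr{L}_\infty$-space of any density is isomorphic to a complemented subspace of some $L_1(\mu)$. In particular $Z^*$ is weakly sequentially complete.

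Apply Rosenthal's $\ell_1$ theorem to $(z_n^*)$. Suppose some subsequence were weakly Cauchy. By weak sequential completeness it would converge weakly to some $w^*$. Since it is also weak$^*$ null, $w^* = 0$. This contradicts $|z^{**}(z_n^*)| \geq \varepsilon$. Hence some subsequence, still denoted $(z_n^*)$, is equivalent to the unit vector basis of $\ell_1$.

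Define $T: Z \to c_0$ by $T(z) = (z_n^*(z))_n$. This is bounded and lands in $c_0$ because $(z_n^*)$ is weak$^*$ null, exactly as in Step~2 of the proof of Theorem~\ref{thm:schurnotgrothendieck}. Its adjoint $T^*: \ell_1 \to Z^*$ sends $e_n$ to $z_n^*$. The $\ell_1$-equivalence says that $T^*$ is bounded below. By the closed range theorem, $T$ is surjective, so $c_0$ is a quotient of $Z$.

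\emph{Main obstacle.} The key step is the structural input that $Z^*$ is weakly sequentially complete for a nonseparable $\mathscr{L}_\infty$-space. I would obtain it as follows:
\begin{enumerate}
\item Use the local $\ell_\infty^n$ structure to show that $Z^{**}$ is injective, hence complemented in some $C(K)$.
\item Deduce that $Z^{***}$, and therefore its complemented subspace $Z^*$, is complemented in an abstract $L$-space, i.e.\ in some $L_1(\mu)$.
\end{enumerate}
This is a local-reflexivity argument and is indifferent to density. Once it is in hand, the rest consists of standard subsequence extractions valid for arbitrary index sets.
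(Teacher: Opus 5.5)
Your argument is correct. It differs from the paper mainly in that the paper gives no proof at all: the statement is imported from Gonz\'alez--Kania [GK21, Proposition~3.1.8], and the closing sentence about arbitrary density is asserted without justification.

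Your proof supplies that justification and isolates what is really used. The implication ``$c_0$ quotient $\Rightarrow$ not Grothendieck'' holds in every Banach space. The converse needs only that $Z^*$ be weakly sequentially complete. A weak$^*$ null sequence that is not weakly null then has, by Rosenthal's $\ell_1$ theorem, a subsequence equivalent to the $\ell_1$ basis. A weak$^*$ null $\ell_1$-sequence $(z_n^*)$ is exactly the dual picture of the surjection $z \mapsto (z_n^*(z))_n$ onto $c_0$.

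None of your ingredients depends on density: Rosenthal's theorem, the injectivity of $Z^{**}$, Kakutani's representation of AL-spaces, and weak sequential completeness of $L_1(\mu)$ for arbitrary, possibly non-$\sigma$-finite $\mu$. So your route makes the density claim transparent. It in fact proves the equivalence for every Banach space with weakly sequentially complete dual, with $\mathscr{L}_\infty$-spaces as one instance. This matches the paper's own remark that the $\mathscr{L}_\infty$ structure enters only at this step. The citation buys only brevity.

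Two small remarks. First, for weak sequential completeness you only need $Z^*$ to embed in some $L_1(\mu)$; complementation is unnecessary, since the property passes to closed subspaces. Second, the step ``$Z$ is $\mathscr{L}_\infty$ $\Rightarrow$ $Z^{**}$ injective'' is a compactness argument, not a local-reflexivity one. Embed $Z$ in $\ell_\infty(\Gamma)$ and take projections of norm at most $\lambda$ onto the local $\ell_\infty^n$-like subspaces. Then pass to a weak$^*$-operator-topology cluster point $P\colon \ell_\infty(\Gamma) \to Z^{**}$ extending the canonical embedding. Local reflexivity is needed only for the converse implication.
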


\subsection{Proof of Theorem~\ref{thm:general}}

\begin{proof}[Proof of Theorem~\ref{thm:general}]
\textbf{Step 1: $Y$ is an $\mathscr{L}_\infty$-space.}
By hypothesis.

\textbf{Step 2: $Y/X$ is infinite-dimensional and has the Schur property.}
By hypothesis.

\textbf{Step 3: $Y$ is not Grothendieck.}
Suppose for contradiction that $Y$ is Grothendieck. Write
$X^\perp = \{f \in Y^* : f|_X \equiv 0\}$ for the annihilator of $X$
in $Y^*$, and let $Q: Y \to Y/X$ denote the quotient map. Identify
$(Y/X)^*$ with $X^\perp \subseteq Y^*$ via the isometric embedding
$f \mapsto f \circ Q$.

The adjoint $Q^*: (Y/X)^* \to Y^*$ is a weak$^*$-to-weak$^*$ embedding:
if $f_k \xrightarrow{w^*} 0$ in $(Y/X)^*$, then $Q^*(f_k)(y) =
f_k(Q(y)) \to 0$ for every $y \in Y$, so $Q^*(f_k)
\xrightarrow{w^*} 0$ in $Y^*$. Hence any weak$^*$ null sequence in
$(Y/X)^*$ is weak$^*$ null in $Y^*$, and therefore weakly null in $Y^*$
by the Grothendieck property of $Y$. To see that weak nullity of
$(Q^*(f_k))$ in $Y^*$ implies weak nullity of $(f_k)$ in $(Y/X)^*$:
since $Q^*$ is an isometric embedding, its adjoint
$Q^{**}: Y^{**} \to (Y/X)^{**}$ is surjective. For any
$\Phi \in (Y/X)^{**}$, choose $\Psi \in Y^{**}$ with
$Q^{**}(\Psi) = \Phi$. Then $\Phi(f_k) = Q^{**}(\Psi)(f_k) =
\Psi(Q^*(f_k)) \to 0$, where the last step uses that $Q^*(f_k)
\xrightarrow{w} 0$ in $Y^*$ and $\Psi \in Y^{**}$. Thus $Y/X$ is
Grothendieck, contradicting Theorem~\ref{thm:schurnotgrothendieck}
since $Y/X$ is infinite-dimensional with the Schur property (Step~2).
Therefore $Y$ is not Grothendieck.

\textbf{Step 4: $Y$ has $c_0$ as a quotient.}
Since $Y$ is an $\mathscr{L}_\infty$-space (Step~1) that is not
Grothendieck (Step~3), Theorem~\ref{thm:linftygrothendieck} gives that
$Y$ has a quotient isomorphic to $c_0$.
\end{proof}

\begin{proof}[Proof of Corollary~\ref{cor:lopezabad}]
Apply Theorem~\ref{thm:general} with $Y$ the Lopez-Abad space over $X$
and $X$ the base space. By Theorem~\ref{thm:lopezabad}, $Y$ is an
$\mathscr{L}_{\infty,\lambda}$-space and $Y/X$ is infinite-dimensional
with the Schur property, so the hypotheses of Theorem~\ref{thm:general}
are satisfied.
\end{proof}

\begin{remark}
The key intermediate conclusion --- that $Y$ is not Grothendieck --- uses
only the following inputs: (i) $Y/X$ is infinite-dimensional, (ii) $Y/X$
has the Schur property, and (iii) $Y$ being Grothendieck would force $Y/X$
to be Grothendieck via the weak$^*$-to-weak$^*$ embedding of $Q^*$. In
particular, the $\mathscr{L}_\infty$ structure of $Y$ plays no role in
Step~3: it enters only in Step~4, where it allows the application of
Theorem~\ref{thm:linftygrothendieck}.
\end{remark}

\section{Coordinate Embedding Realization}
\label{sec:coords_realization}

Having established the general existence result, we now refine our analysis
by working within a specific realization of the Lopez-Abad construction that
enables an explicit construction of the quotient map and full analysis of
the kernel structure.

\subsection{The Coordinate Embedding Assumption}

The Lopez-Abad construction \cite[Lemma 3.5]{Lo13} has freedom in choosing
the isomorphism $\nu_t: S_t \to Y_t$ at each stage (see
equation~\eqref{eq:Yt} for the definition of $Y_t$); the only requirements
are the norm bounds $\|\nu_t\| \leq \eta$ and $\|\nu_t^{-1}\| \leq \lambda$.
We work with a specific realization in which $\nu_t$ is a
\emph{coordinate embedding}: given any Auerbach basis
$\{w_1,\ldots,w_{m_t}\}$ of $Y_t$ (a basis satisfying $\|w_l\| =
\|w_l^*\| = 1$ and $w_l^*(w_k) = \delta_{lk}$; see
\cite[Proposition 1.c.3]{LT77}), we set $n_t = m_t + q_t$ for some $q_t \geq 1$
(where $q_t$ is the number of free directions added at stage $t$, and
$m_t := \dim(Y_t)$) and define $\nu_t(w_l) = e_l$ for $l = 1,\ldots,m_t$,
so that $S_t = \nu_t(Y_t) = \operatorname{span}\{e_1,\ldots,e_{m_t}\}$
is a coordinate subspace of $\ell_\infty^{n_t}$. The choice $q_t \geq 1$
is always achievable: if $n_t = m_t$ at all but finitely many stages, then
$\dim(G_s)$ would be bounded over $s \in [\kappa]^{<\omega}$, and
$Y = \overline{\bigcup_s j_{s,\infty}(G_s)}$ would be separable,
contradicting $\kappa \geq \aleph_1$. We call $e_{m_t+1},\ldots,e_{n_t}$
the \emph{free directions} at stage $t$.

\begin{remark}
The coordinate embedding assumption can always be satisfied: given any
valid realization of the Lopez-Abad construction, one may choose $\nu_t$
at each stage so that $S_t = \nu_t(Y_t)$ is a coordinate subspace of
$\ell_\infty^{n_t}$, since the only requirement on $\nu_t$ is the norm
bound and the finite-dimensionality of $Y_t$ guarantees the existence of
an Auerbach basis with which to define such an embedding. In particular,
for every infinite-dimensional Banach space $X$, there exists a nonseparable
Bourgain-Pisier $\mathscr{L}_\infty$-space $Y$ over $X$ satisfying the
coordinate embedding assumption, and for this $Y$ the explicit surjection
$T: Y \to c_0$ of Theorem~\ref{thm:explicit} is defined.
\end{remark}

This realization is used throughout
Sections~\ref{sec:coords}--\ref{sec:mainexplicit}. Its essential consequence
is that $S_t$ is a coordinate subspace of $\ell_\infty^{n_t}$, which
enables: (1) well-definedness of coordinate functionals on the pushout
quotient, and (2) the disjoint-support structure needed for the
$\ell^\infty$-equivalence in Lemma~\ref{lem:c0equivalence}.
Section~\ref{sec:necessity} shows both properties fail without this
assumption.

\section{Coordinate Functionals and the Orthogonal Extension Property}
\label{sec:coords}

\subsection{Definition of Coordinate Functionals}

We select a sequence of singleton stages $s_k = \{\gamma_k\}$ for
$k \in \mathbb{N}$ with $\gamma_1 < \gamma_2 < \cdots$ distinct elements
of $\kappa$. Such a sequence exists since $\kappa \geq \aleph_1$ is
uncountable; the specific choice of $(\gamma_k)$ does not affect the
construction. For each $k$, we fix a free direction index
$j_k \in \{m_{s_k}+1,\ldots,n_{s_k}\}$ (where $m_{s_k} = \dim(Y_{s_k})$
as defined in Section~\ref{sec:coords_realization}) and define
$e^*_{\gamma_k} : G_{s_k} \to \mathbb{R}$ as the $j_k$-th coordinate
functional: $e^*_{\gamma_k}([(x,0)]) = x_{j_k}$.

\textbf{Extension to $Y$:} Each $e^*_{\gamma_k}$ is first defined on
$G_{s_k}$, then extended to each $E_t$ for $t \supseteq s_k$ inductively
using the pushout structure of
$E_t = (\ell_\infty^{n_t} \oplus_1 E_{\bar{t}})/N_{\nu_t}$, and finally
to all of $Y$ by density and continuity, as made precise in
Lemma~\ref{lem:orthogonal} below. The extension has norm at most
$\lambda(1+\eta)$ uniformly across all stages.

\subsection{The Orthogonal Extension Lemma}

\begin{lemma}[Orthogonal Extension]
\label{lem:orthogonal}
Let $s = \{\gamma_1, \ldots, \gamma_k\} \subset \kappa$ with
$\gamma_1 < \cdots < \gamma_k$. Fix $\gamma_{k+1} > \gamma_k$ and set
$t = s \cup \{\gamma_{k+1}\}$. Suppose that for each $i \leq k$, the
functional $e^*_{\gamma_i}$ has been extended to $E_s$ with norm at most
$\lambda(1+\eta)$. Then there exists $v \in G_t$ and
$e^*_{\gamma_{k+1}} \in E_t^*$ such that:
\begin{enumerate}
    \item $e^*_{\gamma_{k+1}}(v) = 1$.
    \item $\|v\| \leq \lambda$.
    \item For each $i \leq k$, the extension of $e^*_{\gamma_i}$ to $E_t$
    satisfies $e^*_{\gamma_i}(v) = 0$ and has norm $\leq \lambda(1+\eta)$.
    \item $e^*_{\gamma_{k+1}}(j_{s,t}(y)) = 0$ for all $y \in G_s$
    (where we use $G_s \subseteq E_s$ so that $j_{s,t}$ is defined on
    $G_s$).
    \item $\|e^*_{\gamma_{k+1}}\|_{E_t^*} \leq \lambda$.
\end{enumerate}
\end{lemma}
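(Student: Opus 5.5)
Throughout, the plan exploits the pushout description of the last stage. Let $E_t = (\ell_\infty^{n_t} \oplus_1 E_{\bar t})/N_{\nu_t}$ with $S_t = \operatorname{span}\{e_1,\ldots,e_{m_t}\}$, a coordinate subspace by the coordinate embedding assumption of Section~\ref{sec:coords_realization}. Since $t \supsetneq s$ and $s \prec t$, the image $j_{s,\bar t}(G_s)$ is among the accumulated spaces spanning $Y_t$ in \eqref{eq:Yt}. So every element of $j_{s,t}(G_s)$ has the form $[(0,\nu_t(\xi))] = [(\xi,0)]$ with $\xi \in S_t$, i.e.\ a vector in $\ell_\infty^{n_t}$ supported on the first $m_t$ coordinates.

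\textbf{Choice of $v$ and $e^*_{\gamma_{k+1}}$.} Fix a free index $j \in \{m_t+1,\ldots,n_t\}$, which exists since $q_t\ge 1$. Set $v = [(e_j,0)] \in G_t$; then (2) holds because $\|[(x,0)]\| \le \|x\|$, and in fact $\|v\|\le 1\le\lambda$. Define $e^*_{\gamma_{k+1}}$ on $\ell_\infty^{n_t}\oplus_1 E_{\bar t}$ by $(x,e)\mapsto x_j$. This kills $N_{\nu_t}$, because $S_t$ has vanishing $j$-th coordinate, so it descends to $E_t$ with norm at most $1\le\lambda$. This gives (1) and (5). Property (4) follows from the preceding paragraph: elements of $j_{s,t}(G_s)$ are represented by $(\xi,0)$ with $\xi_j=0$. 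One must check that the representation is consistent, but that is exactly the well-definedness on the quotient just verified.

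\textbf{Extending the old functionals (property (3)).} Given $e^*_{\gamma_i}\in E_s^*$, first transport it to $E_{\bar t}$ by Hahn--Banach along the isometry $j_{s,\bar t}$, keeping norm $\le\lambda(1+\eta)$. Then define it on $E_t$ by
\[
[(x,e)] \mapsto e^*_{\gamma_i}(e) + e^*_{\gamma_i}\bigl(\nu_t(P x)\bigr),
\]
where $P$ is the coordinate projection onto $S_t$, which is contractive in $\ell_\infty^{n_t}$. This vanishes on $N_{\nu_t}$, since $P\xi=\xi$ for $\xi\in S_t$. It also agrees with the old functional on $j_{\bar t,t}(E_{\bar t})$, and kills $v$ because $Pe_j=0$, which is (3).

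\textbf{Main obstacle: the norm bound.} The naive estimate gives norm $\le \max(\|e^*_{\gamma_i}\|,\ \|e^*_{\gamma_i}\|\eta)$ on the $\oplus_1$ sum, which is fine, so the real issue is uniformity under iteration. Each stage's norm should be controlled by the base bound, rather than compounding. For this I would compute the norm directly as a maximum over the two summands, using $\|\nu_t P\|\le\eta\le 1$, rather than multiplying. That shows the extended functional has the same norm bound $\lambda(1+\eta)$. The delicate point to double-check is that the Hahn--Banach step to $E_{\bar t}$ is compatible with the transition maps $k$ of \eqref{eq:transition}, i.e.\ that the $G_s$-values are preserved. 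I expect this to hold since $j_{s,t}$ is an isometric embedding factoring through $E_{\bar t}$.
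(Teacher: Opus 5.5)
Your argument is correct and is essentially the paper's. It uses the same $v=[(e_j,0)]$, the same coordinate functional on the $\ell_\infty^{n_t}$-slot, and the same extension $[(x,e)]\mapsto e^*_{\gamma_i}(e)+e^*_{\gamma_i}(\nu_t(Px))$, which is the paper's $\phi_i(b)+e^*_{\gamma_i}(e)$ with $\phi_i=e^*_{\gamma_i}\circ\nu_t\circ P$, bounded via the two $\oplus_1$ summands. The only real difference is that the paper asserts $\bar t=s$ and works on $E_s$ directly, whereas your Hahn--Banach transfer along the isometric embedding $E_s\to E_{\bar t}$ also covers $\bar t\neq s$; this case genuinely occurs under the stated anti-lexicographic order once $k\ge 1$ (for $t=\{\gamma_1,\gamma_2\}$ one gets $\bar t=\{\gamma_2\}$), and property (4) follows more simply from the fact that the new functional vanishes on every class $[(0,e)]$, so the detour through $Y_t$ is unnecessary.
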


\begin{proof}
Since $\gamma_{k+1} > \max(s)$, we have $\bar{t} = s$, so
$E_t = (\ell_\infty^{n_t} \oplus_1 E_s)/N_{\nu_t}$ with
$S_t \subsetneq \ell_\infty^{n_t}$ and $\|\nu_t\| \leq \eta$.

\textbf{Base case ($k = 0$).} When $k = 0$, the set $s = \emptyset$ and
$t = \{\gamma_1\}$. There are no previous functionals to extend, so
Property (3) is vacuous. Properties (1), (2), (4), and (5) are established
directly below with no inductive hypothesis required.

\textbf{Properties (1), (2), (4), (5).} In the coordinate embedding
realization, $S_t = \operatorname{span}\{e_1,\ldots,e_{m_t}\}$, so any
free direction $j_{k+1} \in \{m_t+1,\ldots,n_t\}$ satisfies: every
$\xi \in S_t$ has $\xi_{j_{k+1}} = 0$. Set
$v := [(e_{j_{k+1}}, 0)] \in G_t$. By
\cite[Lemma 3.5, property (D)]{Lo13}, $\|v\| \leq 1 \leq \lambda$,
giving Property (2). Since $j_{s,t}(y) = [(0,y)]$ has zero
$\ell_\infty^{n_t}$ component, Property (4) is immediate. Define
$e^*_{\gamma_{k+1}}([(b,e)]) := b_{j_{k+1}}$. Well-definedness: if
$(b-b', e-e') \in N_{\nu_t}$ then $b-b' \in S_t$, so
$(b-b')_{j_{k+1}} = 0$, hence $b_{j_{k+1}} = b'_{j_{k+1}}$. Property
(1): $e^*_{\gamma_{k+1}}(v) = (e_{j_{k+1}})_{j_{k+1}} = 1$. Property
(5): $|b_{j_{k+1}}| \leq \|b+\xi\|_{\ell_\infty^{n_t}}$ for all
$\xi \in S_t$ (since $\xi_{j_{k+1}} = 0$), so
$|e^*_{\gamma_{k+1}}([(b,e)])| \leq \inf_{\xi \in S_t}\|b+\xi\|
\leq \|[(b,e)]\|$, giving $\|e^*_{\gamma_{k+1}}\| \leq 1 \leq \lambda$.

\textbf{Inductive step: Property (3).} Fix $i \leq k$. Define
$\phi_i \in \ell_1^{n_t}$ by: $\phi_i(\xi) := e^*_{\gamma_i}(\nu_t(\xi))$
for $\xi \in S_t$; $\phi_i(e_{j_{k+1}}) := 0$; and $\phi_i(e_l) := 0$
for all remaining basis directions
$l \notin \{1,\ldots,m_t,j_{k+1}\}$. This gives
$\|\phi_i\|_{\ell_1^{n_t}} \leq
\|e^*_{\gamma_i}\|_{E_s^*}\cdot\|\nu_t\| \leq \lambda(1+\eta)\eta$.
Define:
\begin{equation}
    \tilde{e}^*_{\gamma_i}([(b, e)]) := \phi_i(b) + e^*_{\gamma_i}(e).
\end{equation}
\textit{Well-definedness}: if $(b-b', e-e') \in N_{\nu_t}$, then
$b-b' = \xi \in S_t$ and $e-e' = -\nu_t(\xi)$, so
$\phi_i(b-b') + e^*_{\gamma_i}(e-e') =
e^*_{\gamma_i}(\nu_t(\xi)) - e^*_{\gamma_i}(\nu_t(\xi)) = 0$.

\textit{Vanishing on $v$}:
$\tilde{e}^*_{\gamma_i}([(e_{j_{k+1}},0)]) =
\phi_i(e_{j_{k+1}}) + e^*_{\gamma_i}(0) = 0$.

\textit{Norm bound}: since
$\|\phi_i\|_{\ell_1^{n_t}} \leq \lambda(1+\eta)\eta$, for any
representative $(b,e)$ and any $\xi \in S_t$:
\begin{align}
    |\tilde{e}^*_{\gamma_i}([(b,e)])| &=
    |\phi_i(b+\xi) + e^*_{\gamma_i}(e-\nu_t(\xi))| \notag \\
    &\leq \|\phi_i\|_{\ell_1^{n_t}}\|b+\xi\|_{\ell_\infty^{n_t}} +
    \|e^*_{\gamma_i}\|_{E_s^*}\|e-\nu_t(\xi)\| \notag \\
    &\leq \lambda(1+\eta)\eta\|b+\xi\| +
    \lambda(1+\eta)\|e-\nu_t(\xi)\| \notag \\
    &\leq \lambda(1+\eta)\bigl(\|b+\xi\| + \|e-\nu_t(\xi)\|\bigr).
\end{align}
Taking the infimum over $\xi \in S_t$ gives
$\|\tilde{e}^*_{\gamma_i}\|_{E_t^*} \leq \lambda(1+\eta)$.
\end{proof}

For each $k \in \mathbb{N}$, we denote by $v_k \in G_{s_k}$ the vector
$v \in G_t$ constructed in Lemma~\ref{lem:orthogonal} at stage
$s_k = \{\gamma_k\}$, satisfying $e^*_{\gamma_k}(v_k) = 1$ and
$\|v_k\| \leq \lambda$. These vectors are used throughout
Sections~\ref{sec:coords}--\ref{sec:mainexplicit}.

\begin{remark}
\label{rmk:normbound}
The bound $\lambda(1+\eta)$ is uniform across all stages since $\lambda$
and $\eta$ are fixed constants satisfying $\lambda\eta < 1 < \lambda$.
In particular, all extended functionals $e^*_{\gamma_k}$ on $Y$ have
norm at most $\lambda(1+\eta)$.
\end{remark}

\begin{lemma}[General Vanishing Property]
\label{lem:general_vanishing}
Let $s \in [\kappa]^{<\omega}$, $\gamma \in \kappa \setminus s$, and
$y \in G_s$. Then $e^*_\gamma(j_{s,\infty}(y)) = 0$.
\end{lemma}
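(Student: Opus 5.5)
To prove Lemma~\ref{lem:general_vanishing}, I would argue through the stage structure used to build $e^*_\gamma$. Here $\gamma=\gamma_k$ for some $k$, and $e^*_\gamma$ is defined through the free direction $j_k$ at the stage $s_k=\{\gamma\}$. The plan is to choose a finite set $u\supseteq s\cup\{\gamma\}$ and compute $e^*_\gamma(j_{s,u}(y))$ inside $E_u$. By the compatibility $j_{s,\infty}=j_{u,\infty}\circ j_{s,u}$, this equals $e^*_\gamma(j_{s,\infty}(y))$, since $e^*_\gamma$ on $Y$ is the continuous extension of the coherent family of functionals on the $E_u$ from Lemma~\ref{lem:orthogonal}. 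So the task is to show that the extended functional on $E_u$ kills the image of $G_s$ whenever $\gamma\notin s$.

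The first step is to understand the extended functional in pushout coordinates. At the stage where $\{\gamma\}$ is attached, $e^*_\gamma([(b,e)])=b_{j_k}$; this ignores the $E_{\bar t}$ component and vanishes on everything carried over from earlier stages. At every later stage $t$, the extension has the form $[(b,e)]\mapsto\phi(b)+e^*_\gamma(e)$. Here $\phi$ is the functional on $\ell_\infty^{n_t}$ that equals $e^*_\gamma\circ\nu_t$ on $S_t$ and vanishes on the free coordinates.

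From this form, the key observation follows by induction along the anti-lexicographic order. For each stage $t$ with $\gamma\notin t$, the value of $e^*_\gamma$ on $[(x,0)]\in G_t$ equals $e^*_\gamma(\nu_t(\pi_{S_t}x))$, where $\pi_{S_t}$ is the coordinate projection onto $S_t$. Since $\nu_t(S_t)=Y_t$ is spanned by earlier $G_u$'s and $X_t$ (see \eqref{eq:Yt}), all with $u\subseteq t$ and hence $\gamma\notin u$, the induction hypothesis gives zero on the $G_u$ parts. The $X_t$ part lies in $j_{\emptyset,\cdot}(X)$, and $e^*_\gamma$ vanishes there by the base case, because the functional $b\mapsto b_{j_k}$ ignores the $E$-component. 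The transition maps $k^{(t,s)}_u$ preserve $[(x,0)]$ by \eqref{eq:transition}, so the statement passes from local systems to the global $G_s$. Combining these gives $e^*_\gamma(j_{s,u}(y))=0$ for $y\in G_s$.

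The main obstacle is the ordering of stages. Stages $t\not\ni\gamma$ with $t\succ\{\gamma\}$ occur after $e^*_\gamma$ was introduced, and stages before $\{\gamma\}$ occur before it. For the earlier stages, the functional was built by the $b_{j_k}$ formula, which sees nothing of $E_{\bar t}$, so vanishing there is automatic. For the later stages, I must check that the chosen extension of $e^*_\gamma$ really uses $\phi$ vanishing on free coordinates at every stage, not only at the singleton stages treated in Lemma~\ref{lem:orthogonal}. I would adopt this as the definition of the extension at all stages and verify that well-definedness and the norm bound carry over verbatim. Finally, since $\gamma\notin s$, $\{\gamma\}\not\subseteq s$, so $\{\gamma\}$ never appears among the $u\subseteq s$ contributing to $G_s$, and the induction closes.
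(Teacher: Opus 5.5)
Your argument is correct, modulo the same coherence assumptions on the extended functionals that the paper itself makes, but it follows a different route from the paper's.

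The paper does a single pushout computation. It sets $s'=s\cup\{\gamma\}$ and asserts $\bar{s'}=s$ ``since $\gamma>\max(s)$''. Then $j_{s,s'}(y)=[(0,y)]$ has zero $\ell_\infty^{n_{s'}}$-slot. Finally, it treats $e^*_\gamma$ as introduced at stage $s'$ by $[(b,e)]\mapsto b_{j_\gamma}$, which reads only that slot.

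You instead introduce $e^*_\gamma$ once, at stage $\{\gamma\}$, and induct along the anti-lexicographic order:
\begin{enumerate}
\item stages preceding $\{\gamma\}$ are killed because their images sit in the $E$-slot there;
\item stages after $\{\gamma\}$ not containing $\gamma$ are killed because the $\ell_\infty$-part of the extension factors through $\nu_t$ on $S_t$ and vanishes on free coordinates, while $Y_t$ is generated by $X_t$ and the $G_u$ with $u\subseteq t$.
\end{enumerate}

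The paper's version is shorter, but it has three limitations.
\begin{enumerate}
\item It only covers $\gamma>\max s$, which $\gamma\notin s$ does not give you. The case $\gamma<\max s$ is exactly what Lemma~\ref{lem:c0image} needs when $s$ contains an ordinal above every $\gamma_k$.
\item It describes one functional as introduced at an $s$-dependent stage.
\item Its identification $\bar{s'}=s$ is off for $|s|\geq 1$: the predecessor is $(s\setminus\{\min s\})\cup\{\gamma\}$. This is harmless, since every subset of $s$ precedes $\bar{s'}$.
\end{enumerate}

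Your version proves the lemma as stated, at the price of two structural inputs you should make explicit.
\begin{enumerate}
\item The extension at \emph{every} stage uses a $\phi$ vanishing on all free coordinates. You rightly adopt this as the definition.
\item $Y_t$ involves only $G_u$ with $u\subseteq t$. This is the intended reading of \eqref{eq:Yt}, but its printed index set ``$u\prec t$, $\bar u\subseteq t$'' would literally admit $u=\{\gamma\}$, whose predecessor is $\emptyset$. Under that reading your induction step breaks, so state the reading you use.
\end{enumerate}
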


\begin{proof}
Let $s' = s \cup \{\gamma\}$. Since $\gamma > \max(s)$, we have
$\bar{s'} = s$, so
$E_{s'} = (\ell_\infty^{n_{s'}} \oplus_1 E_s)/N_{\nu_{s'}}$ and
$j_{s,s'}(y) = [(0,y)]$. The functional $e^*_\gamma$ is introduced at
stage $s'$ as $e^*_\gamma([(b,e)]) := b_{j_\gamma}$, acting only on
the $\ell_\infty^{n_{s'}}$ component, where $j_\gamma$ denotes the
free direction index associated to $\gamma$ as chosen in
Section~\ref{sec:coords}. Applied to $j_{s,s'}(y) = [(0,y)]$:
\begin{equation}
    e^*_\gamma(j_{s,\infty}(y)) =
    e^*_\gamma(j_{s',\infty}([(0,y)])) = 0_{j_\gamma} = 0,
\end{equation}
where the first equality uses $j_{s,\infty} = j_{s',\infty} \circ j_{s,s'}$
\cite[Lemma~3.4]{Lo13}, and the second uses that $[(0,y)]$ has zero
$\ell_\infty^{n_{s'}}$ component.
\end{proof}

\section{Construction of the Quotient Map}

\begin{proposition}[The Quotient Map]
\label{prop:Tbounded}
Define $T: Y \to c_0$ by
$T(y) = (e^*_{\gamma_1}(y), e^*_{\gamma_2}(y), \ldots)$ (that
$T(y) \in c_0$ for each $y$ is verified in Lemma~\ref{lem:c0image}
below). Then $T$ is a bounded linear operator with
$\|T\| \leq \lambda(1+\eta)$.
\end{proposition}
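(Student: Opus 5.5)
The plan is to prove linearity and the norm bound coordinatewise, deferring membership in $c_0$ to Lemma~\ref{lem:c0image} as the statement indicates. For linearity: each $e^*_{\gamma_k}$ is, by Lemma~\ref{lem:orthogonal} and the subsequent extension by density and continuity, a bounded linear functional on $Y$. A map into sequence space whose coordinates are linear functionals is linear, so $T(ay+bz)=aT(y)+bT(z)$ holds coordinatewise. Once Lemma~\ref{lem:c0image} places each $T(y)$ in $c_0$, $T$ is a linear map $Y\to c_0$.

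For boundedness, I will use the supremum norm together with the uniform bound of Remark~\ref{rmk:normbound}:
\[
\|T(y)\|_{c_0}=\sup_k |e^*_{\gamma_k}(y)| \le \sup_k \|e^*_{\gamma_k}\|_{Y^*}\,\|y\| \le \lambda(1+\eta)\|y\|.
\]
This yields $\|T\|\le\lambda(1+\eta)$.

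The main obstacle is justifying that each $e^*_{\gamma_k}$ is really a single well-defined element of $Y^*$ with norm at most $\lambda(1+\eta)$. Lemma~\ref{lem:orthogonal} only produces extensions along the chain $s=\{\gamma_1,\dots,\gamma_k\}\subset t$, whereas $Y$ is the closure of $\bigcup_s j_{s,\infty}(G_s)$ over the whole directed set $[\kappa]^{<\omega}$. I would handle this in three steps. First, for arbitrary finite $u\supseteq s_k$, extend $e^*_{\gamma_k}$ stage by stage along the anti-lexicographic construction of $E_u$, using the same pushout recipe $\tilde e^*([(b,e)])=\phi(b)+e^*(e)$ with $\phi$ supported on $S$. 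This preserves the bound $\lambda(1+\eta)$ because $\|\nu\|\le\eta$ and the $\oplus_1$ norm gives the same infimum estimate as in the proof of Lemma~\ref{lem:orthogonal}. Second, check compatibility with the transition maps $k^{(t,s)}_u$, using \eqref{eq:transition}, and with $j_{t,s}$. Together these give a functional on the inductive limit that is uniformly bounded, hence extends uniquely to the completion and restricts to $Y$. Third, for stages where $\gamma_k$ has not yet been introduced, set the functional to zero, which is consistent with Lemma~\ref{lem:general_vanishing}.

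If full compatibility across the directed system proves delicate, the fallback is a norm-preserving extension via Hahn--Banach, from the increasing union along any cofinal chain to $Y$. Any such extension has norm at most the uniform bound, so the estimate on $\|T\|$ is unaffected.
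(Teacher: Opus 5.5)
Your argument is the paper's: linearity holds coordinatewise, and $\|T(y)\|_{c_0}=\sup_k|e^*_{\gamma_k}(y)|\le\lambda(1+\eta)\|y\|$ by the uniform bound of Remark~\ref{rmk:normbound}, with well-definedness of the extended functionals on $Y$ taken from the construction (your compatibility sketch goes further than the paper, which simply asserts this). One caution: drop the Hahn--Banach fallback, because $[\kappa]^{<\omega}$ has no cofinal chain when $\kappa$ is uncountable (a chain of finite sets is countable), and an arbitrary norm-preserving extension need not keep the vanishing of Lemma~\ref{lem:general_vanishing} that Lemma~\ref{lem:c0image} relies on, so the norm bound would survive but $T$ would no longer be known to map into $c_0$.
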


\begin{proof}
Linearity is clear. By Remark~\ref{rmk:normbound},
$\|e^*_{\gamma_k}\| \leq \lambda(1+\eta)$ for all $k$, so
$\|T(y)\|_{c_0} \leq \lambda(1+\eta)\|y\|$.
\end{proof}

\begin{lemma}[Image Contained in $c_0$]
\label{lem:c0image}
For every $y \in Y$, $T(y) \in c_0$.
\end{lemma}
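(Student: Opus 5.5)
The plan is a density argument: show $T(y)\in c_0$ first on the dense subspace $\bigcup_s j_{s,\infty}(G_s)$, then pass to the closure using that $c_0$ is closed in $\ell_\infty$.

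\textbf{Step 1 (finitely supported images on the dense union).} Let $y = j_{s,\infty}(g)$ with $s\in[\kappa]^{<\omega}$ and $g\in G_s$. Since $s$ is finite and the $\gamma_k$ are distinct, only finitely many $\gamma_k$ lie in $s$. For every $k$ with $\gamma_k\notin s$, Lemma~\ref{lem:general_vanishing} gives $e^*_{\gamma_k}(j_{s,\infty}(g))=0$. Hence $T(y)$ has finite support and lies in $c_0$.

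Every element of $\operatorname{span}\bigcup_s j_{s,\infty}(G_s)$ is a finite sum of such vectors. Alternatively, it lies in a single $j_{s,\infty}(G_s)$ after enlarging $s$, using directedness and $j_{t,s}(G_t)\subseteq G_s$. Either way its image under $T$ is finitely supported.

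\textbf{Step 2 (closure).} By Proposition~\ref{prop:Tbounded}, $T$ is a bounded linear map from $Y$ into $\ell_\infty$ with $\|T\|\le\lambda(1+\eta)$; at this point we only view it as $\ell_\infty$-valued. Theorem~\ref{thm:lopezabad}(3) says this union is dense in $Y$. For $y\in Y$ choose $y_n$ in the union with $y_n\to y$. Then $T y_n\to Ty$ in the $\ell_\infty$ norm, and each $Ty_n\in c_{00}\subseteq c_0$. Since $c_0$ is norm-closed in $\ell_\infty$, we get $Ty\in c_0$.

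\textbf{Main obstacle.} The delicate point is Step 1, specifically that Lemma~\ref{lem:general_vanishing} applies to all but finitely many $k$. That lemma's proof assumes $\gamma>\max(s)$, so that $\overline{s\cup\{\gamma\}}=s$. This holds automatically for the tail of the sequence only if the $\gamma_k$ are eventually larger than $\max s$, and $\gamma_1<\gamma_2<\cdots$ makes them so. If some $\gamma_k$ with $k$ large could be below $\max s$, I would instead reduce $s$: replace $s$ by a larger finite set $s'\supseteq s$ and work with $j_{s,s'}(g)\in G_{s'}$. Then only the finitely many $\gamma_k<\max s'$ need separate treatment, and all remaining $\gamma_k$ exceed $\max s'$, so the lemma applies to them.

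A further check is that the extended functional $e^*_{\gamma}$ is compatible with the maps $j_{s,\infty}$, so that $e^*_\gamma\circ j_{s,\infty}$ is computed at stage $s\cup\{\gamma\}$. I would take this from the inductive extension in Lemma~\ref{lem:orthogonal}, which defines the functionals coherently along the directed system.
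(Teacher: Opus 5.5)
Your Steps 1--2 are the paper's proof. You approximate $y$ by an element of $\bigcup_s j_{s,\infty}(G_s)$. You use Lemma~\ref{lem:general_vanishing} to kill every coordinate with $\gamma_k\notin s$, which is all but finitely many since $s$ is finite and the $\gamma_k$ are distinct. You absorb the approximation error with the uniform bound $\|e^*_{\gamma_k}\|\le\lambda(1+\eta)$. Passing to the limit via closedness of $c_0$ in $\ell_\infty$, instead of the paper's explicit $\varepsilon$-estimate, is only a change of packaging.

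The ``main obstacle'' paragraph, however, is mathematically wrong and should be removed. You are right that the printed proof of Lemma~\ref{lem:general_vanishing} only treats $\gamma>\max(s)$. But a strictly increasing sequence $\gamma_1<\gamma_2<\cdots$ in $\kappa$ need not eventually exceed $\max s$. With $\delta=\sup_k\gamma_k$, any finite $s$ containing an ordinal $\ge\delta$ has $\gamma_k<\max s$ for \emph{every} $k$; for example $\gamma_k=k$ and $s=\{\omega\}$, a choice the paper explicitly allows. When $\operatorname{cf}(\kappa)>\omega$ (e.g.\ $\kappa=\aleph_1$), no choice of $(\gamma_k)$ avoids this, since then $\delta<\kappa$.

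Your fallback goes the wrong way. Passing to $s'\supseteq s$ only raises $\max s'$, so $\{k:\gamma_k<\max s'\}$ grows and can be infinite, not finite as you claim. The mechanism you quote is also shaky: even when $\gamma>\max s$ and $s\neq\emptyset$, the identity $\overline{s\cup\{\gamma\}}=s$ fails for the anti-lexicographic order of Section~\ref{sec:lopezabad}. For instance, $\{1\}\prec\{2\}\prec\{1,2\}$.

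So no reduction to the case $\gamma>\max s$ is available. Your argument, exactly like the paper's, rests on Lemma~\ref{lem:general_vanishing} as stated, for every $\gamma\notin s$. Cite it in that form rather than offering a justification that does not hold.
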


\begin{proof}
Let $\varepsilon > 0$. Since
$Y = \overline{\bigcup_{s} j_{s,\infty}(G_s)}$, there exist finite
$s \subset \kappa$ and $y_s \in G_s$ with
$\|y - j_{s,\infty}(y_s)\| < \varepsilon/\lambda(1+\eta)$. For all $k$
large enough that $\gamma_k \notin s$,
Lemma~\ref{lem:general_vanishing} gives
$e^*_{\gamma_k}(j_{s,\infty}(y_s)) = 0$, so:
\begin{equation}
    |e^*_{\gamma_k}(y)| \leq \|e^*_{\gamma_k}\|\|y - j_{s,\infty}(y_s)\|
    < \lambda(1+\eta) \cdot \frac{\varepsilon}{\lambda(1+\eta)} =
    \varepsilon.
\end{equation}
Hence $\lim_{k\to\infty} e^*_{\gamma_k}(y) = 0$.
\end{proof}

\section{Surjectivity via Successive Approximation}

The following lemma is the key technical ingredient for surjectivity.
It shows that the vectors $v_k \in G_{s_k}$ introduced at distinct
singleton stages $s_k = \{\gamma_k\}$ are $\ell^\infty$-equivalent in
$Y$, which is what allows the successive approximation argument in
Theorem~\ref{thm:surjectivity} to converge.

\begin{lemma}[$\ell^\infty$-Equivalence Lemma]
\label{lem:c0equivalence}
For any $k \in \mathbb{N}$ and scalars $(a_i)_{i=1}^k$, let
$\sigma_k = \{\gamma_1, \ldots, \gamma_k\}$. Then:
\begin{equation}
    \frac{1}{\lambda(1+\eta)} \max_{1 \leq i \leq k} |a_i| \leq
    \left\| \sum_{i=1}^k a_i j_{s_i, \sigma_k}(v_i) \right\| \leq
    \lambda \max_{1 \leq i \leq k} |a_i|.
\end{equation}
\end{lemma}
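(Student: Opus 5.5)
The plan is to prove the two inequalities separately. The lower bound will come from the biorthogonality furnished by Lemma~\ref{lem:orthogonal} and Lemma~\ref{lem:general_vanishing}. The upper bound will come from the disjoint coordinate structure provided by the coordinate embedding realization, transported into a single $G$-space that is $\lambda$-isomorphic to an $\ell_\infty^n$.

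\textbf{Lower bound.} Set $w = \sum_{i=1}^k a_i j_{s_i,\sigma_k}(v_i)$ and fix $i_0$ with $|a_{i_0}| = \max_i |a_i|$. I will apply the extended functional $e^*_{\gamma_{i_0}}$, which has norm at most $\lambda(1+\eta)$ by Remark~\ref{rmk:normbound}, to $w$. The computation needs two facts.
\begin{enumerate}
\item $e^*_{\gamma_{i_0}}(j_{s_{i_0},\sigma_k}(v_{i_0})) = 1$. This holds because the extensions built in Lemma~\ref{lem:orthogonal} are compatible with the embeddings $j$: each extension $\tilde e^*$ restricts to the previous functional on the $[(0,e)]$ component.
\item $e^*_{\gamma_{i_0}}(j_{s_i,\sigma_k}(v_i)) = 0$ for $i \neq i_0$. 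Since $\gamma_{i_0} \notin s_i = \{\gamma_i\}$ and $v_i \in G_{s_i}$, this is Lemma~\ref{lem:general_vanishing}, after passing through $j_{\sigma_k,\infty}$ and using that this map is isometric.
\end{enumerate}
Together these give $|a_{i_0}| = |e^*_{\gamma_{i_0}}(w)| \le \lambda(1+\eta)\|w\|$, which is the lower bound.

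\textbf{Upper bound.} This is the main obstacle. I want to realize all the vectors $j_{s_i,\sigma_k}(v_i)$ inside a single $G$-space, namely $G_{\sigma_k}$ (or some $G_u$ with $u \subseteq \sigma_k$). By property~(4) of the directed system, $j_{s_i,\sigma_k}(G_{s_i}) \subseteq G_{\sigma_k}$, and $G_{\sigma_k}$ is identified with $\ell_\infty^{n_{\sigma_k}}$ via $\phi_{\sigma_k}$ with $\|\phi_{\sigma_k}^{-1}\| \le \lambda$. If the images $x^{(i)} := \phi_{\sigma_k}(j_{s_i,\sigma_k}(v_i))$ have pairwise disjoint supports and $\|x^{(i)}\|_\infty \le 1$, then
\[
\Bigl\|\sum_i a_i x^{(i)}\Bigr\|_\infty \le \max_i |a_i|,
\]
and applying $\phi_{\sigma_k}^{-1}$ gives the bound $\lambda \max_i |a_i|$.

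To obtain these disjoint supports, I will use the coordinate embedding assumption. The set $Y_{\sigma_k}$ from \eqref{eq:Yt} contains each $j_{s_i,\bar\sigma_k}(G_{s_i})$, and $\nu_{\sigma_k}$ sends an Auerbach basis of $Y_{\sigma_k}$ to standard unit vectors. I will choose that Auerbach basis so that it contains the images of the distinct free-direction vectors $v_i$; this is where the freedom in the realization is spent. Then each $v_i$ corresponds, modulo $N_{\nu_{\sigma_k}}$, to a distinct unit vector $e_{l_i}$ of $\ell_\infty^{n_{\sigma_k}}$. A caveat is that $\|v_i\|$ may be less than $1$, so the basis vectors would have to be normalizations of these images. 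I expect this identification of the $v_i$ with disjoint unit vectors to be the delicate step, because it requires compatibility of the Auerbach basis choices across stages. If this direct route fails, I will fall back on an induction on $k$ along $\sigma_{k-1} \subset \sigma_k$. The pushout description $E_{\sigma_k} = (\ell_\infty^{n} \oplus_1 E_{\sigma_{k-1}})/N_\nu$ then lets me place the earlier sum in $S_{\sigma_k}$ and the new $v_k$ on a free coordinate, so that the $\ell_\infty$ norm of the combined representative is the maximum of the two pieces.
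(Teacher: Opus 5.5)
Your lower bound is the paper's argument: biorthogonality $e^*_{\gamma_{i_0}}(j_{s_i,\sigma_k}(v_i))=\delta_{i_0 i}$ together with the uniform bound $\lambda(1+\eta)$. One correction: for $i>i_0$ (so $\gamma_{i_0}<\gamma_i$) the vanishing comes from Lemma~\ref{lem:orthogonal}(3), as in the paper, and not from Lemma~\ref{lem:general_vanishing}, whose proof only treats $\gamma>\max s$.

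The upper bound has a genuine gap, at exactly the step you flag. You correctly observe that $j_{s_i,\sigma_k}(v_i)$ enters $E_{\sigma_k}$ through the second slot and lies in $Y_{\sigma_k}$. Its $\phi_{\sigma_k}$-image is therefore $\nu_{\sigma_k}^{-1}$ of it, a vector of $S_{\sigma_k}=\operatorname{span}\{e_1,\dots,e_{m_{\sigma_k}}\}$ whose support is dictated by the Auerbach basis used at that stage.

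You cannot simply choose that basis to contain the normalized vectors $u_i=j(v_i)/\|j(v_i)\|$, where $j(v_i)$ denotes the image in $Y_{\sigma_k}$. Such a basis needs norm-one functionals $w_i^*$ with $w_i^*(u_l)=\delta_{il}$, i.e.\ $\|u_i+\sum_{l\neq i}c_l u_l\|\geq 1$ for all scalars $c_l$. The only biorthogonal functionals you have are the $e^*_{\gamma_i}$, of norm up to $\lambda(1+\eta)>1$, and they give only $\|u_i+\sum_{l\neq i}c_l u_l\|\geq 1/(\lambda(1+\eta))$.

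More fundamentally, the step is circular. Any coordinate choice with $\nu_{\sigma_k}(e_{l_i})=c_i\,j(v_i)$ on distinct unit vectors already gives $\|\sum_i a_i j(v_i)\|\leq\|\nu_{\sigma_k}\|\max_i|a_i/c_i|$. So such a choice exists only if an $\ell_\infty$ upper estimate for the $j(v_i)$ already holds. In addition, the Auerbach bases are fixed arbitrarily when $Y$ is built, before $(\gamma_k)$ and the $j_k$ are selected. Coherence of tailored choices across all stages $\sigma_k$ and the transition maps is also not addressed.

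Your fallback induction does not repair this. The immediate $\prec$-predecessor of $\sigma_k$ among its subsets is not $\sigma_{k-1}$ (already $\overline{\{\gamma_1,\gamma_2\}}=\{\gamma_2\}$), so $E_{\sigma_k}$ is not a single pushout over $E_{\sigma_{k-1}}$. More importantly, $j_{s_k,\sigma_k}(v_k)$ is not a free-coordinate vector $[(e_j,0)]$ of $\ell_\infty^{n_{\sigma_k}}$. Since $v_k\in G_{\{\gamma_k\}}$, its image also lies in the second slot and is identified with an element of $S_{\sigma_k}$, just like the earlier terms. The free directions at stage $\sigma_k$ produce different vectors of $G_{\sigma_k}$.

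The paper makes no basis choice here. It asserts $\phi_{\sigma_k}(j_{s_i,\sigma_k}(v_i))=e_{j_i}$ with the $j_i$ distinct, on the grounds that later embeddings act by $i_E$ and leave the first slot untouched. Your own observation shows this is not automatic: $j_i$ indexes a coordinate of $\ell_\infty^{n_{s_i}}$, not of $\ell_\infty^{n_{\sigma_k}}$, and $i_E$ moves the vector into the second slot. So the ingredient missing from your proposal is a proof that the vectors $\phi_{\sigma_k}(j_{s_i,\sigma_k}(v_i))$ have pairwise disjoint supports and sup-norm at most $1$. That is precisely the point the paper's proof takes for granted.
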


\begin{proof}
Note that $s_i = \{\gamma_i\} \subseteq \{\gamma_1,\ldots,\gamma_k\} = \sigma_k$
for all $i \leq k$, so the maps $j_{s_i,\sigma_k}$ are well-defined.

\textbf{Lower bound.} By Lemmas~\ref{lem:orthogonal}
and~\ref{lem:general_vanishing},
$e^*_{\gamma_i}(j_{s_l,\sigma_k}(v_l)) = \delta_{il}$, so
$|e^*_{\gamma_i}(\sum_l a_l j_{s_l,\sigma_k}(v_l))| = |a_i|$. Since
$\|e^*_{\gamma_i}\| \leq \lambda(1+\eta)$, the lower bound follows.

\textbf{Upper bound.} Let $\phi_{\sigma_k}: G_{\sigma_k} \to \ell_\infty^{n_{\sigma_k}}$
be the isomorphism $\phi_{\sigma_k}([(x,0)]) = x$ introduced in
Section~\ref{sec:lopezabad}, with $\|\phi_{\sigma_k}\| \leq 1$ and
$\|\phi_{\sigma_k}^{-1}\| \leq \lambda$.

We claim $\phi_{\sigma_k}(j_{s_i,\sigma_k}(v_i)) = e_{j_i}$ for each $i \leq k$.
To see this, recall $j_{s_i,\sigma_k} = j_{s_i,\sigma_k}^{(\sigma_k)} \circ k_{s_i}^{(s_i,\sigma_k)}$.
By equation~\eqref{eq:transition}, the transition map satisfies
$k_{s_i}^{(s_i,\sigma_k)}(v_i) = [(e_{j_i},0)] \in G_{s_i}^{(\sigma_k)}$,
since $v_i = [(e_{j_i},0)] \in G_{s_i}$ and the transition map
preserves the $\ell_\infty^{n_{s_i}}$ component identically. The
coordinate $j_i$ was introduced as a \emph{free direction} at singleton
stage $s_i = \{\gamma_i\}$, meaning $j_i \notin \{1,\ldots,m_{s_i}\}$
and the subsequent local system embeddings $j_{s_i,\sigma_k}^{(\sigma_k)}$ act
via $i_E$ (i.e., $z \mapsto [(0,z)]$) at each later pushout step,
which does not disturb the $\ell_\infty^{n_{s_i}}$ first-slot component
already sitting in $G_{s_i}^{(\sigma_k)}$. Hence the image of
$[(e_{j_i},0)]$ under $j_{s_i,\sigma_k}^{(\sigma_k)}$ lands in $G_{\sigma_k}$
\cite[Lemma~3.4, Claim~5(b)]{Lo13} with $\phi_{\sigma_k}$-coordinate $e_{j_i}$.

Since $\gamma_1 < \cdots < \gamma_k$ are strictly increasing, the indices
$j_1,\ldots,j_k$ are free directions chosen at distinct singleton stages,
and hence correspond to distinct coordinates of $\ell_\infty^{n_{\sigma_k}}$.
Therefore $e_{j_1},\ldots,e_{j_k}$ have pairwise disjoint support in
$\ell_\infty^{n_{\sigma_k}}$, giving
$\|\sum_i a_i e_{j_i}\|_{\ell_\infty^{n_{\sigma_k}}} = \max_i|a_i|$.
Pulling back via $\phi_{\sigma_k}^{-1}$:
\[
\left\|\sum_i a_i j_{s_i,\sigma_k}(v_i)\right\| \leq
\lambda\left\|\sum_i a_i e_{j_i}\right\|_{\ell_\infty^{n_{\sigma_k}}} =
\lambda\max_i|a_i|. \qedhere
\]
\end{proof}

\begin{theorem}[Surjectivity of $T$]
\label{thm:surjectivity}
The map $T: Y \to c_0$ is surjective.
\end{theorem}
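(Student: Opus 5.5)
The plan is to build, for a given $c=(c_i)\in c_0$, an explicit preimage as a norm-convergent series in the vectors $u_i := j_{s_i,\infty}(v_i)\in Y$. By Lemmas~\ref{lem:orthogonal} and~\ref{lem:general_vanishing} (through $j_{s_l,\infty}=j_{\sigma_k,\infty}\circ j_{s_l,\sigma_k}$), we have $e^*_{\gamma_i}(u_l)=\delta_{il}$. Hence $T(u_l)$ is the $l$-th unit vector $e_l$ of $c_0$, and $T$ maps $\operatorname{span}\{u_l\}$ onto the finitely supported sequences $c_{00}$.

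The key step is to show that $y:=\sum_{i=1}^\infty c_i u_i$ converges in $Y$. Let $y_k=\sum_{i\le k}c_iu_i$. For $k<k'$, write $y_{k'}-y_k=j_{\sigma_{k'},\infty}\bigl(\sum_{i=k+1}^{k'} c_i\, j_{s_i,\sigma_{k'}}(v_i)\bigr)$ and use that $j_{\sigma_{k'},\infty}$ is an isometry. The upper bound of Lemma~\ref{lem:c0equivalence} (with $a_i=0$ for $i\le k$) then gives
\[
\|y_{k'}-y_k\|\le \lambda \max_{k<i\le k'}|c_i|\longrightarrow 0,
\]
since $c\in c_0$. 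So $(y_k)$ is Cauchy and $y\in Y$. The same estimate also gives $\|y\|\le\lambda\|c\|_\infty$, so this is a bounded linear right inverse $R:c_0\to Y$ with $T\circ R=\mathrm{id}$.

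Next, continuity of $T$ (Proposition~\ref{prop:Tbounded}) gives $T(y)=\lim_k T(y_k)=\lim_k (c_1,\dots,c_k,0,\dots)=c$ in $c_0$, using $c\in c_0$ once more. This proves surjectivity. Alternatively, one can phrase it as a successive approximation in the spirit of the section title. $T(B_Y)$ contains, up to the factor $\lambda$, the unit ball of $c_{00}$, which is dense in the ball of $c_0$, and the standard open-mapping iteration then closes the argument. The direct series route above seems cleaner, and I would present it first, mentioning the iteration only as a remark.

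The main obstacle is the legitimacy of the upper bound in Lemma~\ref{lem:c0equivalence} for tails. The identity $e^*_{\gamma_i}(u_l)=\delta_{il}$ requires that the extended functionals $e^*_{\gamma_i}$ from Lemma~\ref{lem:orthogonal} be compatible across the directed system, so that the value $e^*_{\gamma_i}(u_l)$ does not depend on the stage at which it is computed. I would spell this out by noting that each extension $\tilde e^*_{\gamma_i}$ restricts to $e^*_{\gamma_i}$ on $j_{\bar t,t}(E_{\bar t})$. That is exactly the $e^*_{\gamma_i}(e)$ term in its definition, so the functionals form a compatible family, extend to the inductive limit, and by density extend to $Y$.
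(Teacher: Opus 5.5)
Your proof is correct relative to the paper's earlier lemmas and is essentially the paper's own argument: both take partial sums of $\sum_i c_i\, j_{s_i,\infty}(v_i)$, show they are Cauchy by writing each tail as $j_{\sigma_m,\infty}$ of an element of $E_{\sigma_m}$ and applying the upper bound of Lemma~\ref{lem:c0equivalence}, and then identify $T(y)=c$ by continuity. The differences are cosmetic. You pass to the limit using norm continuity of $T$ together with $\|c-(c_1,\dots,c_k,0,\dots)\|_\infty\to 0$, where the paper uses continuity of each $e^*_{\gamma_j}$ coordinatewise; you also record the bounded right inverse $R$ with $\|R\|\le\lambda$; and your compatibility paragraph is already presupposed by the definition of $T$ (Proposition~\ref{prop:Tbounded}, Lemma~\ref{lem:c0image}), so it is not needed in this proof.
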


\begin{proof}
Let $a = (a_k) \in c_0$, let $\sigma_k = \{\gamma_1,\ldots,\gamma_k\}$
for each $k \in \mathbb{N}$, and let $\tilde{v}_i = j_{s_i,\infty}(v_i) \in Y$.
Choose $y_1 = a_1 v_1 \in G_{s_1}$; note that
$e^*_{\gamma_1}(y_1) = a_1$ and $\|y_1\| \leq \lambda|a_1|$ since
$e^*_{\gamma_1}(v_1) = 1$ and $\|v_1\| \leq \lambda$ by
Lemma~\ref{lem:orthogonal}. Define partial sums
$y_k = j_{s_1,\infty}(y_1) + \sum_{i=2}^k a_i\tilde{v}_i$.

\textbf{Step 1 (Coordinates).} For $j=1$: $e^*_{\gamma_1}(y_k) = a_1$
since $e^*_{\gamma_1}(\tilde{v}_i) = 0$ for $i \geq 2$ by
Lemmas~\ref{lem:orthogonal}(3) and~\ref{lem:general_vanishing}. For
$j \geq 2$: $e^*_{\gamma_j}(j_{s_1,\infty}(y_1)) = 0$ by
Lemma~\ref{lem:general_vanishing} (since $\gamma_j \notin s_1$), and
$e^*_{\gamma_j}(\tilde{v}_i) = \delta_{ji}$ by
Lemmas~\ref{lem:orthogonal} and~\ref{lem:general_vanishing}. Hence
$e^*_{\gamma_j}(y_k) = a_j$ for all $k \geq j$.

\textbf{Step 2 (Cauchy).} For $m > k$, let $\sigma_m = \{\gamma_1,\ldots,\gamma_m\}$.
Since $s_i \subseteq \sigma_m$ for all $i \leq m$, the maps $j_{s_i,\sigma_m}$
are well-defined. Then:
\begin{equation}
    \|y_m - y_k\| = \left\|\sum_{i=k+1}^m a_i j_{s_i,\infty}(v_i)\right\|
    = \left\|j_{\sigma_m,\infty}\left(\sum_{i=k+1}^m a_i j_{s_i,\sigma_m}(v_i)\right)\right\|
    \leq \lambda\max_{k<i\leq m}|a_i|,
\end{equation}
where the first equality uses that $j_{s_i,\infty} = j_{\sigma_m,\infty} \circ j_{s_i,\sigma_m}$
\cite[Lemma~3.4]{Lo13} and $j_{\sigma_m,\infty}$ is isometric, and the
inequality uses Lemma~\ref{lem:c0equivalence}. Since $a \in c_0$,
$\max_{i>k}|a_i| \to 0$, so $(y_k)$ is Cauchy in $Y$.

\textbf{Step 3.} Let $y = \lim_k y_k \in Y$. By continuity of each
$e^*_{\gamma_j}$ and Step 1, $T(y)_j = a_j$ for all $j$, so $T(y) = a$.
\end{proof}

\section{Main Explicit Construction Theorem}
\label{sec:mainexplicit}

\begin{proof}[Proof of Theorem~\ref{thm:explicit}]
Since $Y$ is a Banach space and $T: Y \to c_0$ is a bounded surjective
linear map onto the Banach space $c_0$ (bounded by
Proposition~\ref{prop:Tbounded}; surjective by
Theorem~\ref{thm:surjectivity}; maps into $c_0$ by
Lemma~\ref{lem:c0image}), the open mapping theorem gives
$Y/\ker(T) \cong c_0$.
\end{proof}

\section{Structure of the Kernel}
\label{sec:kernel}

For each $s \in [\kappa]^{<\omega}$, define
$K_s := j_{s,\infty}(G_s) \cap K$ where $K = \ker(T)$. Here
$A \cong_\lambda B$ denotes that $A$ and $B$ are isomorphic via an
isomorphism $\Phi$ with $\|\Phi\|\|\Phi^{-1}\| \leq \lambda$.

\begin{lemma}
\label{lem:Ks_structure}
For each $s \in [\kappa]^{<\omega}$, with
$r_s := |\{k \in \mathbb{N} : \gamma_k \in s\}|$:
\begin{enumerate}
    \item $K_s \cong_\lambda \ell_\infty^{n_s - r_s}$.
    \item For $s \subseteq t$, $K_s \subseteq K_t$.
    \item $\bigcup_{s \in [\kappa]^{<\omega}} K_s$ is dense in $K$.
\end{enumerate}
\end{lemma}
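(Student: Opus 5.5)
The plan is to prove the three items in the order (2), (3), (1). Items (2) and (3) follow fairly directly from results already established, while (1) needs a concrete description of $T$ restricted to $j_{s,\infty}(G_s)$.

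\textbf{Step 1: description of $T$ on $j_{s,\infty}(G_s)$.} Fix $s$ and $y\in G_s$. By Lemma~\ref{lem:general_vanishing}, $e^*_{\gamma_k}(j_{s,\infty}(y))=0$ whenever $\gamma_k\notin s$. Hence
\[
K_s=\{j_{s,\infty}(y): y\in G_s,\ e^*_{\gamma_k}(j_{s,\infty}(y))=0 \text{ for all } \gamma_k\in s\},
\]
which is the kernel of only $r_s$ functionals on the $n_s$-dimensional space $G_s$. These $r_s$ functionals are linearly independent. Indeed, whenever $s_k\subseteq s$, the vectors $\tilde v_k=j_{s_k,\infty}(v_k)$ lie in $j_{s,\infty}(G_s)$ by the inclusion $j_{t,s}(G_t)\subseteq G_s$, and Lemmas~\ref{lem:orthogonal} and~\ref{lem:general_vanishing} give $e^*_{\gamma_i}(\tilde v_k)=\delta_{ik}$. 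So $\dim K_s=n_s-r_s$.

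\textbf{Step 2: item (2).} For $s\subseteq t$ we have $j_{s,\infty}=j_{t,\infty}\circ j_{s,t}$ and $j_{s,t}(G_s)\subseteq G_t$. Therefore $j_{s,\infty}(G_s)\subseteq j_{t,\infty}(G_t)$, and intersecting both sides with $K$ gives $K_s\subseteq K_t$.

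\textbf{Step 3: item (3).} Let $y\in K$ and $\varepsilon>0$.
\begin{itemize}
\item By density, pick $s$ and $z\in j_{s,\infty}(G_s)$ with $\|y-z\|<\varepsilon$. Using (2), enlarge $s$ to contain $s_k$ for every $\gamma_k\in s$; this needs only finitely many $k$.
\item Set $z'=z-\sum_{\gamma_k\in s}e^*_{\gamma_k}(z)\,\tilde v_k$. By Step 1, $z'\in j_{s,\infty}(G_s)$.
\item Every coordinate of $T(z')$ vanishes. For $\gamma_k\notin s$ this is Lemma~\ref{lem:general_vanishing}. For $\gamma_k\in s$ it follows from $e^*_{\gamma_i}(\tilde v_k)=\delta_{ik}$. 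So $z'\in K_s$.
\item Since $T(y)=0$, we have $|e^*_{\gamma_k}(z)|=|e^*_{\gamma_k}(z-y)|\le\lambda(1+\eta)\varepsilon$. The upper bound of Lemma~\ref{lem:c0equivalence} (transported to $Y$ as in the proof of Theorem~\ref{thm:surjectivity}) then gives
\[
\|z-z'\|\le \lambda^2(1+\eta)\varepsilon .
\]
\end{itemize}
Hence $\|y-z'\|\le(1+\lambda^2(1+\eta))\varepsilon$, which proves density.

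\textbf{Step 4: item (1), the main obstacle.} The aim is to write the functionals $e^*_{\gamma_k}$ for $\gamma_k\in s$ in the coordinates $\phi_s: G_s\to\ell_\infty^{n_s}$.

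The first attempt is to follow the transition maps as in the upper bound of Lemma~\ref{lem:c0equivalence}. Each $e^*_{\gamma_k}$ should then become $x\mapsto x_{j_k}+\psi_k(x)$. Here $\psi_k$ comes from the extension terms $\phi_i$ of Lemma~\ref{lem:orthogonal}, which act only through the $\nu$-identified coordinates. The free coordinates $j_k$ are distinct and pairwise unaffected.

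If every $\psi_k$ vanishes on $G_s$, then $K_s=\phi_s^{-1}(\{x: x_{j_k}=0 \text{ for } \gamma_k\in s\})$. This preimage is a coordinate copy of $\ell_\infty^{n_s-r_s}$, and restricting $\phi_s$, with $\|\phi_s\|\le1$ and $\|\phi_s^{-1}\|\le\lambda$, gives the $\lambda$-isomorphism.

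If the $\psi_k$ do not vanish, the fallback is to use the projection onto the coordinates outside $\{j_k:\gamma_k\in s\}$. Its inverse on $K_s$ is $x\mapsto x-\sum_k\psi_k(x)e_{j_k}$. This map is an isomorphism onto $\ell_\infty^{n_s-r_s}$, with constant controlled by $\lambda(1+\sup_k\|\psi_k\|)$.

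Getting exactly the constant $\lambda$, rather than a slightly worse constant, is the step I expect to be delicate. It may require showing that the $\psi_k$ vanish on $G_s$ under the coordinate embedding assumption.
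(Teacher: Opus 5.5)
\textbf{Verdict: genuine gap in item (1).} Your Steps 1--3 are sound relative to the lemmas already in the paper.

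Your proof of (3) is a small variant of the paper's. You correct $z$ by subtracting $\sum_{\gamma_k\in s}e^*_{\gamma_k}(z)\,\tilde v_k$ and invoke the upper bound of Lemma~\ref{lem:c0equivalence}. The paper instead zeroes the $j_k$-coordinates of $\phi_s(g)$ and uses $\|\phi_s^{-1}\|\le\lambda$. Yours has the merit of not depending on (1), and both give the constant $1+\lambda^2(1+\eta)$.

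Step 4 does not prove (1). It reduces the claim to the vanishing of your correction terms $\psi_k$ on $G_s$. Equivalently, it needs that, in $\phi_s$-coordinates, $e^*_{\gamma_k}$ restricted to $j_{s,\infty}(G_s)$ is exactly $x\mapsto x_{j_k}$. You leave this open. The fallback cannot yield the lemma as stated, for two reasons:
\begin{enumerate}
\item its constant $\lambda(1+\sup_k\|\psi_k\|)$ exceeds $\lambda$;
\item the inverse formula $x\mapsto x-\sum_k\psi_k(x)e_{j_k}$ already presupposes that the $\psi_k$ do not involve the coordinates $j_{k'}$, which you have not shown.
\end{enumerate}

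The paper's proof of (1) consists precisely of the identification you are missing, stated without further argument. For $y=j_{s,\infty}([(x,0)])$ it asserts $T(y)_k=x_{j_k}$ when $\gamma_k\in s$, and $T(y)_k=0$ otherwise by Lemma~\ref{lem:general_vanishing}. Hence $\phi_s(K_s)$ is the coordinate subspace $C_s$, and restricting $\phi_s$ gives the constant $\lambda$ at once. This rests on two earlier claims: the coordinate picture of Lemma~\ref{lem:general_vanishing}, where $e^*_\gamma$ is treated as the first-slot coordinate $b\mapsto b_{j_\gamma}$, and the claim $\phi_s(\tilde v_k)=e_{j_k}$ in the proof of Lemma~\ref{lem:c0equivalence}.

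Your caution is warranted. By the extension formula of Lemma~\ref{lem:orthogonal}(3), the extended functional on $G_t$ is $[(x,0)]\mapsto\phi_i(x)$. This lives on the $\nu_t$-identified coordinates $1,\dots,m_t$. It is nonzero whenever $e^*_{\gamma_i}$ does not vanish on $Y_t$, for instance on the image of $\tilde v_i$. Moreover, $j_k$ indexes a coordinate of $\ell_\infty^{n_{s_k}}$. At a later stage $s$, the image of $G_{s_k}$ enters $G_s$ only through $\nu_s^{-1}$, that is, inside the span of $e_1,\dots,e_{m_s}$. So there is no separate surviving coordinate there to play the role of $x_{j_k}$, and an arbitrary Auerbach basis of $Y_s$ does not turn $e^*_{\gamma_k}\circ\phi_s^{-1}$ into a single coordinate functional.

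To complete (1) with constant $\lambda$, you need a real argument that the realization can be arranged so that $e^*_{\gamma_k}\circ\phi_s^{-1}$ is a coordinate functional at every stage $s\ni\gamma_k$. The paper asserts this rather than proving it, and your proposal as written leaves it unproved.
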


\begin{proof}
\textbf{Part (1).} For $y = j_{s,\infty}([(x,0)]) \in j_{s,\infty}(G_s)$,
we have $T(y)_k = x_{j_k}$ if $\gamma_k \in s$, and $T(y)_k = 0$
otherwise (by Lemma~\ref{lem:general_vanishing}). Hence $y \in K_s$ if
and only if $x_{j_k} = 0$ for all $k$ with $\gamma_k \in s$. Under
$\phi_s: G_s \to \ell_\infty^{n_s}$ (the isomorphism $\phi_s([(x,0)]) = x$
from Section~\ref{sec:lopezabad}, with $\|\phi_s\| \leq 1$ and
$\|\phi_s^{-1}\| \leq \lambda$), this is the coordinate subspace
$C_s := \{x \in \ell_\infty^{n_s} : x_{j_k} = 0 \text{ for all } k
\text{ with } \gamma_k \in s\}$, which is isometric to
$\ell_\infty^{n_s - r_s}$ since the $r_s$ zeroed indices are distinct.
Pulling back via $\phi_s^{-1}$ gives $K_s \cong_\lambda \ell_\infty^{n_s-r_s}$.

\textbf{Part (2).} Follows from $j_{s,\infty}(G_s) \subseteq
j_{t,\infty}(G_t)$ by \cite[Lemma 3.4, Claim 5(b)]{Lo13} (with the
roles of $s$ and $t$ exchanged relative to \cite{Lo13}).

\textbf{Part (3).} Let $y \in K$ and $\varepsilon > 0$. Find finite $s$
and $g = [(x,0)] \in G_s$ with
$\|y - j_{s,\infty}(g)\| < \varepsilon'$, where $\varepsilon' > 0$ is
to be chosen. Define $x_0 \in \ell_\infty^{n_s}$ by
$(x_0)_i = x_i$ for $i \notin \{j_k : \gamma_k \in s\}$ and
$(x_0)_{j_k} = 0$ for all $k$ with $\gamma_k \in s$, and let
$g_0 := \phi_s^{-1}(x_0) \in G_s$. Then $j_{s,\infty}(g_0) \in K_s$
by Part (1). For each $k$ with $\gamma_k \in s$:
\begin{equation}
    |x_{j_k}| = |e^*_{\gamma_k}(j_{s,\infty}(g))| \leq
    |e^*_{\gamma_k}(y)| + \|e^*_{\gamma_k}\|\|y - j_{s,\infty}(g)\|
    \leq 0 + \lambda(1+\eta)\varepsilon',
\end{equation}
using $y \in K$ and Remark~\ref{rmk:normbound}. Hence
$\|x - x_0\|_{\ell_\infty^{n_s}} \leq \lambda(1+\eta)\varepsilon'$,
and since $\|\phi_s^{-1}\| \leq \lambda$:
\begin{equation}
    \|j_{s,\infty}(g) - j_{s,\infty}(g_0)\| = \|g - g_0\| \leq
    \lambda\|x - x_0\|_{\ell_\infty^{n_s}} \leq
    \lambda^2(1+\eta)\varepsilon'.
\end{equation}
Therefore $\|y - j_{s,\infty}(g_0)\| \leq
\varepsilon'(1 + \lambda^2(1+\eta))$. Choosing
$\varepsilon' = \varepsilon/(1 + \lambda^2(1+\eta))$ gives
$\|y - j_{s,\infty}(g_0)\| < \varepsilon$.
\end{proof}

\begin{proposition}
\label{prop:kernel_linf}
The kernel $K = \ker(T)$ is an $\mathscr{L}_{\infty,\lambda}$-space.
\end{proposition}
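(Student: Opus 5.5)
The plan is to verify the definition of an $\mathscr{L}_{\infty,\lambda}$-space directly, in the same form the paper uses after Theorem~\ref{thm:lopezabad}: $K$ should be the closure of a directed union of finite-dimensional subspaces, each $\lambda$-isomorphic to a finite-dimensional $\ell_\infty$. All three needed facts are in Lemma~\ref{lem:Ks_structure}. By Part~(1), each $K_s$ is finite-dimensional with $K_s \cong_\lambda \ell_\infty^{n_s - r_s}$. By Part~(2), the family $(K_s)_{s \in [\kappa]^{<\omega}}$ is increasing along the directed set $[\kappa]^{<\omega}$. By Part~(3), $\bigcup_s K_s$ is dense in $K$.

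First I check that the union is a linear subspace of $K$ and that it is directed. Given $s_1, s_2$, set $t = s_1 \cup s_2$; then $K_{s_1} + K_{s_2} \subseteq K_t$ by Part~(2), so any finite subset of $\bigcup_s K_s$ lies in a single $K_t$. Next I verify the local form of the definition: for every finite-dimensional $F \subseteq K$ and every $\varepsilon>0$ there is a finite-dimensional $F \subseteq H \subseteq K$ with $d(H, \ell_\infty^{\dim H}) \le \lambda + \varepsilon$. Pick a basis of $F$. Approximate each basis vector by an element of $\bigcup_s K_s$ using Part~(3), and by directedness take all the approximants in one $K_t$.

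The main obstacle is that $F$ need not sit inside any $K_t$; it is only close to one. I would handle this with the standard small-perturbation lemma. If the approximants are close enough, relative to the basis constant of the chosen (Auerbach) basis of $F$, there is an isomorphism $U : K \to K$ with $\|U - I\| < \delta$ that carries the approximants onto the basis vectors of $F$, extended by the identity on a complement. Then $H := U(K_t)$ contains $F$ and satisfies $d(H,\ell_\infty^{\dim H}) \le \frac{1+\delta}{1-\delta}\,\lambda$.

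Since the paper defines $\mathscr{L}_{\infty,\lambda}$ loosely (via the closure of a union), this yields the constant $\lambda^+$, that is $\lambda+\varepsilon$ for every $\varepsilon > 0$. That matches the convention already used for $Y$ itself. If the exact constant $\lambda$ is wanted under the closure-of-union definition, the union description from Parts~(1)–(3) gives it directly with no perturbation, and I would present that version as the proof.

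Finally, the density-$\kappa$ claim in Theorem~\ref{thm:explicit} is not part of this proposition, so I would not address it here beyond noting it follows from counting the $K_s$.
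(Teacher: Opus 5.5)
Your proof is correct. Its skeleton is the paper's: approximate a basis of $F$ from $\bigcup_s K_s$ via Lemma~\ref{lem:Ks_structure}(3), and use $K_s \cong_\lambda \ell_\infty^{n_s-r_s}$ from part~(1). You differ in the final step, and there your version is the one that actually works.

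The paper uses stability of bases only to get an isomorphism $\Phi: F \to F'$ onto some $F' \subseteq K_s$, and then declares $H = K_s$ a witness. But $F$ need not lie in $K_s$, so this does not produce a finite-dimensional $H$ with $F \subseteq H \subseteq K$, which the $\mathscr{L}_\infty$ condition requires. Your near-identity automorphism $U$ of $K$ with $U f'_l = f_l$ repairs this. (This is exactly what the Lindenstrauss--Tzafriri perturbation result cited by the paper supplies, once you take an Auerbach basis of $F$ and Hahn--Banach extensions of its biorthogonal functionals.) Then $H = U(K_t)$ contains $F$, lies in $K$, and satisfies $d(H,\ell_\infty^{\dim H}) \le \lambda(1+\delta)/(1-\delta)$.

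You also justify, through part~(2) and directedness of $[\kappa]^{<\omega}$, why all $d$ approximants can be taken in a single $K_t$. The paper asserts this without comment. So the paper's route is only shorter; yours is complete.

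One caution about your last paragraph. Both arguments give the constant $\lambda(1+\varepsilon)$ for every $\varepsilon>0$, i.e.\ $\lambda^+$; the paper's ``since $\varepsilon$ is arbitrary'' hides the same issue you flag. The closure-of-a-directed-union description from parts~(1)--(3) does not give the exact constant $\lambda$ ``with no perturbation'' in the standard local sense. Passing from that description to the local definition is precisely your perturbation step, and that is where the $\varepsilon$ enters. The union statement counts as the proof only if you adopt the paper's loose convention (used after Theorem~\ref{thm:lopezabad}) as the definition. Present the perturbation version as the proof.
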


\begin{proof}
Let $F \subseteq K$ be finite-dimensional with basis
$\{f_1,\ldots,f_d\}$ and let $\varepsilon > 0$. By
Lemma~\ref{lem:Ks_structure}(3), there exists $s \in [\kappa]^{<\omega}$
and $f'_1,\ldots,f'_d \in K_s$ with $\|f_l - f'_l\| < \delta$ for
each $l$, where $\delta > 0$ is to be chosen. By the stability of bases
\cite[Proposition 1.a.9]{LT77}, for $\delta$ sufficiently small relative
to $\lambda$ and $d$, the vectors $\{f'_1,\ldots,f'_d\}$ form a basis
of a $d$-dimensional subspace $F' \subseteq K_s$, and the natural map
$f_l \mapsto f'_l$ extends to an isomorphism $\Phi: F \to F'$ with
$\|\Phi\|\|\Phi^{-1}\| \leq 1+\varepsilon$. Since
$K_s \cong_\lambda \ell_\infty^{n_s-r_s}$ by
Lemma~\ref{lem:Ks_structure}(1) and $K_s \subseteq K$, taking $H = K_s$
witnesses the $\mathscr{L}_{\infty,\lambda(1+\varepsilon)}$ condition for
$F$. Since $\varepsilon > 0$ is arbitrary, $K$ is
$\mathscr{L}_{\infty,\lambda}$.
\end{proof}

\begin{remark}
The constant is $\lambda$ rather than $\lambda^2$ because the
intersection with $\ker(T)$ yields an isometric coordinate subspace of
$\ell_\infty^{n_s}$, with no additional distortion beyond the single
factor of $\lambda$ from $\phi_s^{-1}$.
\end{remark}

\begin{proposition}
\label{prop:kernel_density}
$\operatorname{dens}(K) = \kappa$.
\end{proposition}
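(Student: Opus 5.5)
We prove the two inequalities $\operatorname{dens}(K) \leq \kappa$ and $\operatorname{dens}(K) \geq \kappa$ separately.

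\textbf{Upper bound.} We have $K \subseteq Y$, and $\operatorname{dens}(Y) = \kappa$ by Theorem~\ref{thm:lopezabad}. A subspace of a metric space has density at most that of the ambient space, so $\operatorname{dens}(K) \leq \kappa$. Alternatively, this follows directly from Lemma~\ref{lem:Ks_structure}(3). The set $[\kappa]^{<\omega}$ has cardinality $\kappa$, and each $K_s$ is finite-dimensional, hence separable. So $\bigcup_s K_s$ has a dense subset of size at most $\kappa\cdot\aleph_0 = \kappa$.

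\textbf{Lower bound.} The cleanest route is to use that $T$ is a surjection onto the separable space $c_0$ (Theorem~\ref{thm:surjectivity}). Since $Y/K \cong c_0$, we can pick a countable set $C \subseteq Y$ whose image is dense in $Y/K$. Now suppose $D_K$ is dense in $K$.

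\emph{Claim:} the set $D_K + \operatorname{span}_{\mathbb{Q}} C$ is dense in $Y$.

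To see this, take $y \in Y$ and $\varepsilon>0$. Choose $c \in \operatorname{span}_{\mathbb{Q}}C$ with $\|Q_K(y-c)\| < \varepsilon$. Then pick $k \in K$ with $\|y-c-k\| < \varepsilon$, which is possible by the definition of the quotient norm. Finally, approximate $k$ by an element $d \in D_K$.

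The claim gives $\kappa = \operatorname{dens}(Y) \leq \max(|D_K|, \aleph_0)$. Since $\kappa \geq \aleph_1$, this forces $|D_K| \geq \kappa$, so $\operatorname{dens}(K) \geq \kappa$.

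\textbf{Main obstacle.} The only delicate input is $\operatorname{dens}(Y) = \kappa$, which is supplied by Theorem~\ref{thm:lopezabad}. If one wants an internal argument instead, the fallback is as follows. The isometric copy $X \subseteq Y$ has density $\kappa$. We also have $T|_X = 0$, because $j_{\emptyset,s}(X_s)$ sits in $E_s$ via $i_E$ with zero free coordinates, and Lemma~\ref{lem:general_vanishing} applies with $s=\emptyset$ by density of $\bigcup_s X_s$. Hence $X \subseteq K$, which gives $\operatorname{dens}(K) \geq \operatorname{dens}(X) = \kappa$ directly.

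I would present the quotient argument as the main proof and mention this containment $X \subseteq K$ as a second, more structural route.
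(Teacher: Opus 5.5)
Your main argument is essentially the paper's: a dense subset of $K$ together with a countable set lifting a dense subset of the separable quotient $Y/K\cong c_0$ is dense in $Y$, so $\kappa\le\max(\operatorname{dens}(K),\aleph_0)$, and $\kappa\ge\aleph_1$ finishes. Your version is actually the more careful one on two points. You obtain $k\in K$ from the quotient norm, whereas the paper's inference ``$\|T(y)-T(b)\|<\varepsilon/2$, so that $y-b\in K$'' does not follow as written. You also avoid the paper's appeal to $\operatorname{dens}(K)\ge\aleph_1$ in the cardinal arithmetic, which presupposes nonseparability of $K$. Your fallback via $j_{\emptyset,\infty}(X)\subseteq K$ is also sound and gives the lower bound directly from $\operatorname{dens}(X)=\kappa$. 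Each $e^*_{\gamma_k}$ reads only the $\ell_\infty^{n_{s_k}}$ slot while $j_{\emptyset,s_k}(x)=[(0,x)]$, so the proof of Lemma~\ref{lem:general_vanishing} works verbatim for all of $E_\emptyset=X$, not just for some $G_\emptyset$.
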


\begin{proof}
Since $K \subseteq Y$, $\operatorname{dens}(K) \leq \kappa$. For the
lower bound, let $A \subseteq K$ be a dense subset with
$|A| = \operatorname{dens}(K)$. Since $c_0$ is separable, choose a
countable dense subset of $c_0$ and let $B \subseteq Y$ be a set of
coset representatives for $T$ over this countable dense set, so that
$|B| = \aleph_0$ and $T(B)$ is dense in $c_0$. We claim $A + B$ is
dense in $Y$: given any $y \in Y$ and $\varepsilon > 0$, find $b \in B$
with $\|T(y) - T(b)\| < \varepsilon/2$, so that $y - b \in K$; then
find $a \in A$ with $\|(y-b) - a\| < \varepsilon/2$, giving
$\|y - (a+b)\| < \varepsilon$. Therefore
\begin{equation}
    \kappa = \operatorname{dens}(Y) \leq |A + B| \leq
    |A| \cdot |B| = |A| \cdot \aleph_0 = \operatorname{dens}(K),
\end{equation}
where $|A| \cdot \aleph_0 = |A|$ since
$\operatorname{dens}(K) \geq \aleph_1 > \aleph_0$.
\end{proof}

\begin{remark}
The short exact sequence $0 \to K \to Y \to c_0 \to 0$ decomposes $Y$
into an $\mathscr{L}_{\infty,\lambda}$ kernel of density $\kappa$ and a
separable quotient isomorphic to $c_0$, with the $\mathscr{L}_\infty$
constant preserved throughout.
\end{remark}

\section{Necessity of the Coordinate Embedding}
\label{sec:necessity}

We show the coordinate embedding is necessary for the explicit
construction as structured.

\begin{definition}[General and valid realizations]
A \emph{general realization} of the Lopez-Abad construction is one in
which $\nu_t: S_t \to Y_t$ at each stage satisfies only the norm bounds
of \cite[Lemma 3.5, property (b)]{Lo13}, without requiring
$S_t = \nu_t(Y_t)$ to be a coordinate subspace. We call a realization
\emph{valid} if it satisfies these norm bounds; the coordinate embedding
realization of Section~\ref{sec:coords_realization} is one specific
valid realization.
\end{definition}

\begin{proposition}[Well-definedness fails in general]
\label{prop:necessity_welldefined}
In a general realization, the coordinate functional
$e^*_{\gamma_{k+1}}([(b,e)]) := b_{j_{k+1}}$ need not be well-defined
on $E_t$, even when $e_{j_{k+1}} \notin S_t$.
\end{proposition}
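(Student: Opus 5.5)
The plan is to give an explicit small counterexample, a single pushout step. Well-definedness of $e^*_{\gamma_{k+1}}([(b,e)]) := b_{j_{k+1}}$ on $E_t = (\ell_\infty^{n_t}\oplus_1 E_{\bar t})/N_{\nu_t}$ holds exactly when the coordinate functional $x\mapsto x_{j_{k+1}}$ vanishes on $S_t$. Two representatives of the same class differ by $(\xi,-\nu_t(\xi))$ with $\xi\in S_t$, so the candidate functional is well defined iff $\xi_{j_{k+1}}=0$ for every $\xi\in S_t$. The reduction is therefore purely finite-dimensional. We only need a valid realization in which $S_t$ is a proper subspace of $\ell_\infty^{n_t}$ with $e_{j_{k+1}}\notin S_t$ but with $S_t\not\subseteq\ker(e^*_{j_{k+1}})$.

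The concrete choice will be the simplest one: $m_t=\dim Y_t=1$ and $n_t=2$, with $Y_t=\operatorname{span}\{w\}$ and $\|w\|=1$. Take $S_t=\operatorname{span}\{(1,1)\}\subsetneq\ell_\infty^2$ and define $\nu_t((1,1)) := \eta w$. Then $\|\nu_t\|=\eta$, since $\|(1,1)\|_\infty=1$, and $\|\nu_t^{-1}\|=\eta^{-1}$. I would fix this so that it respects the bound $\|\nu_t^{-1}\|\le\lambda$. One option is to rescale so that $\nu_t((1,1))=c\,w$ with $\lambda^{-1}\le c\le\eta$, which is possible since $\lambda\eta\ge$... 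Here care is needed. The constraints are $\|\nu_t\|\le\eta$ and $\|\nu_t^{-1}\|\le\lambda$, so we need some $c$ with $1/\lambda\le c\le\eta$. That requires $\lambda\eta\ge1$, which conflicts with the standing assumption $\lambda\eta<1$. I expect this to be the main obstacle: the normalization of the norm bounds in \cite[Lemma~3.5(b)]{Lo13} must be checked carefully. If the bounds are as literally stated, the same tension affects the coordinate realization too. I would then use whatever normalization Lopez-Abad actually imposes, for instance $\|\nu_t^{-1}\|\le\lambda/\eta$ or a bound on $\nu_t^{-1}$ in terms of the $\ell_\infty$ structure. The counterexample only uses the direction of $S_t$, not its scaling, so whichever admissible scaling the coordinate realization uses can be copied verbatim.

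With the realization fixed, take $j_{k+1}=2$. The vector $e_2\notin S_t$, because $S_t$ consists of the constant vectors. Nonetheless, $(\xi,e)=((1,1),-\nu_t((1,1)))\in N_{\nu_t}$ represents $0$ in $E_t$, while its first-slot $j_{k+1}$-coordinate is $1$. Thus the pairs $(0,0)$ and $((1,1),-c\,w)$ define the same class yet give values $0$ and $1$. So $e^*_{\gamma_{k+1}}$ is not well defined, and this is exactly the claim. Finally, I would remark that no choice of free index repairs this, since every coordinate functional on $\ell_\infty^2$ is nonzero on $(1,1)$. This shows the failure is tied to $S_t$ not being a coordinate subspace rather than to the particular index chosen.
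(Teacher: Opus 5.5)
Your proposal is correct and is essentially the paper's proof. Both use the example $S_t=\operatorname{span}\{e_1+e_2\}\subset\ell_\infty^2$ and the same observation: two representatives differing by $(\xi,-\nu_t(\xi))\in N_{\nu_t}$ give different values of the coordinate. You read off index $2$ and the paper index $1$, which makes no difference.

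Your normalization worry is justified and goes beyond the paper, which never checks that its example is a valid realization. The bounds $\|\nu_t\|\le\eta$ and $\|\nu_t^{-1}\|\le\lambda$ force $\lambda\eta\ge 1$, contradicting the standing assumption $\lambda\eta<1$. Your scale-independence fix works because $e_1\mapsto e_1+e_2$ is an isometry between one-dimensional subspaces of $\ell_\infty^2$, so any admissible $\nu_t$ from a coordinate realization transfers to your $S_t$ with the same norms.
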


\begin{proof}
Well-definedness requires $\xi_{j_{k+1}} = 0$ for all $\xi \in S_t$,
which is strictly stronger than $e_{j_{k+1}} \notin S_t$. Take
$n_t = 2$ and $S_t = \operatorname{span}\{e_1+e_2\}$. Then
$e_1, e_2 \notin S_t$, but $(e_1+e_2)_1 = 1 \neq 0$. The cosets
$(b,e)$ and $(b-(e_1+e_2), e+\nu_t(e_1+e_2))$ represent the same
element of $E_t$, but:
\begin{equation}
    b_1 \neq b_1 - 1 = (b-(e_1+e_2))_1,
\end{equation}
so $[(b,e)] \mapsto b_1$ is not constant on cosets and is not
well-defined on $E_t$. The only functionals on $(\ell_\infty^2)^*$
annihilating $S_t$ are multiples of $b_1 - b_2$, none of which are
coordinate functionals.
\end{proof}

\begin{remark}
This does not preclude the existence of bounded functionals on $E_t$
satisfying the orthogonality properties of Lemma~\ref{lem:orthogonal}:
for example, $f([(b,e)]) = (b_1-b_2)/2$ is well-defined with norm 1.
However, as the next proposition shows, using such non-coordinate
annihilators of $S_t$ causes the upper bound in
Lemma~\ref{lem:c0equivalence} to fail.
\end{remark}

\begin{proposition}[Upper bound fails in general]
\label{prop:necessity_upperbound}
In a general realization, the upper bound
$\|\sum_{i=1}^k a_i j_{s_i,\sigma_k}(v_i)\| \leq \lambda\max_i|a_i|$ of
Lemma~\ref{lem:c0equivalence} can fail.
\end{proposition}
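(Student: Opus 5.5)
The plan is to exhibit a small explicit general realization in which the vectors $v_i$, built from non-coordinate annihilators of $S_t$, have images in $\ell_\infty^{n_{\sigma_k}}$ with overlapping supports. Then $\|\sum_i a_i j_{s_i,\sigma_k}(v_i)\|$ grows like $k\max_i|a_i|$ rather than staying bounded by $\lambda\max_i|a_i|$. Since the statement only says the bound \emph{can} fail, one counterexample family suffices, and we have full freedom in choosing $\nu_t$ subject to $\|\nu_t\|\le\eta$ and $\|\nu_t^{-1}\|\le\lambda$.

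\textbf{Building the example.} Following the remark after Proposition~\ref{prop:necessity_welldefined}, at each singleton stage we take $S_t$ spanned by vectors not supported on a coordinate subspace, for instance $S_t\ni e_1+e_2$. The only annihilators of such an $S_t$ are combinations like $(b_1-b_2)/2$. The natural analogue of the orthogonal extension then forces $v_i=[(x^{(i)},0)]$ with $x^{(i)}$ a vector on which this functional equals $1$, such as $x^{(i)}=e_1-e_2$ or $x^{(i)}=e_1+e_2+\cdots$, rather than a single free basis vector.

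The next step is to propagate these vectors through the later pushouts. The key point is that in a general realization the later $\nu_{t'}$ may identify $Y_{t'}$, which contains the earlier $G$-spaces and hence the earlier $v_i$, with a non-coordinate subspace $S_{t'}$. For example, $\nu_{t'}^{-1}$ can send each earlier $v_i$ to a vector such as $\mathbf{1}+e_i$ (or $\mathbf{1}-2e_i$) sharing a common ``diagonal'' component. Tracing through Lemma~\ref{lem:orthogonal}-style identifications, the image of $j_{s_i,\sigma_k}(v_i)$ in $\ell_\infty^{n_{\sigma_k}}$ under $\phi_{\sigma_k}$ is then no longer a disjointly supported $e_{j_i}$. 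Instead it carries a common coordinate of size comparable to $1$.

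\textbf{Estimating the norm.} Taking $a_i=1$ for all $i$, the sum has a coordinate of size about $k$ in $\ell_\infty^{n_{\sigma_k}}$. Using the lower bound $\|[(x,0)]\|\ge\lambda^{-1}\|x\|$ from \cite[Lemma~3.5, property~(D)]{Lo13}, this gives $\|\sum_i j_{s_i,\sigma_k}(v_i)\|\ge \lambda^{-1}\cdot ck$. That exceeds $\lambda=\lambda\max_i|a_i|$ once $k>\lambda^2/c$.

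\textbf{Main obstacle.} The hardest step is verifying that such non-coordinate $\nu_{t'}$ still satisfy the norm constraints $\|\nu_{t'}\|\le\eta$ and $\|\nu_{t'}^{-1}\|\le\lambda$ uniformly in $k$. The vectors $\mathbf{1}+e_i$ have $\ell_\infty$ norm $2$, while their preimages must have norm controlled, and the span of $k$ such vectors is not uniformly isomorphic to $\ell_\infty^k$ in the required way.

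If uniformity fails, I would fall back to the two-dimensional toy computation $n_t=2$, $S_t=\operatorname{span}\{e_1+e_2\}$, with $v_1,v_2$ chosen as above. This only needs a single configuration where $\|a_1v_1+a_2v_2\|>\lambda\max|a_i|$. Choosing $\lambda$ close to $1$ and $\eta$ small is consistent with $\lambda\eta<1$, and makes the constants explicit.
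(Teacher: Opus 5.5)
There is a genuine gap, and it sits in the step you yourself call the main obstacle. That step is not a technicality: the argument is circular. Your mechanism is the one the paper uses. The paper's proof is the $k=2$ instance: at the singleton stages it takes $n=2$, $S_{s_i}=\operatorname{span}\{e_1+e_2\}$ and $v_i=[(e_1-e_2,0)]$. It then chooses a non-coordinate $\nu_{\sigma_2}$ with $\nu_{\sigma_2}^{-1}(j_{s_1,\sigma_2}(v_1))=e_1+e_2$ and $\nu_{\sigma_2}^{-1}(v_2)=e_1-e_2$. So $\phi_{\sigma_2}$ of the sum is $2e_1$, and property (D) gives norm at least $2/\lambda>\lambda$ for $\lambda<\sqrt2$.

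Here is why this is circular. The vector $\sum_i a_i j_{s_i,\sigma_k}(v_i)$ is the image under $e\mapsto[(0,e)]$ of a vector $w\in Y_{\sigma_k}\subseteq E^{(\sigma_k)}_{\bar\sigma_k}$ built before stage $\sigma_k$; this is what lets you apply $\phi_{\sigma_k}$ at all. Since $(\nu_{\sigma_k}^{-1}(w),-w)\in N_{\nu_{\sigma_k}}$, you get $[(0,w)]=[(\nu_{\sigma_k}^{-1}(w),0)]$, so $\phi_{\sigma_k}$ of the sum is $\nu_{\sigma_k}^{-1}(w)$. But its norm is $\|[(0,w)]\|=\|w\|$, because $e\mapsto[(0,e)]$ is isometric. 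So the quantity you want to bound from below is fixed before $\nu_{\sigma_k}$ is chosen. Your estimate $\|[(x,0)]\|\ge\lambda^{-1}\|x\|_\infty$ at $x=\nu_{\sigma_k}^{-1}(w)$ is then nothing but the admissibility requirement $\|\nu_{\sigma_k}^{-1}\|\le\lambda$. Prescribing overlapping images such as $\mathbf{1}+e_i$ is admissible only if you already know $\|w\|\ge\lambda^{-1}\|\nu_{\sigma_k}^{-1}(w)\|_\infty$, e.g.\ $\|w\|\ge(k+1)/\lambda$ for your choice with all $a_i=1$. The same holds for the paper's $e_1\pm e_2$, which map $\operatorname{span}\{j_{s_1,\sigma_2}(v_1),v_2\}$ onto a copy of $\ell_1^2$. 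In each case admissibility is the conclusion itself, so no choice at the last stage can create a failure of the upper bound.

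What is missing is a direct computation of $\|w\|$ from the earlier pushout norms $\|[(x,e)]\|=\inf_{\xi\in S_t}(\|x+\xi\|_\infty+\|e-\nu_t(\xi)\|)$. It would have to show that the $v_i$ already span something $\ell_1$-like rather than $\ell_\infty$-like before stage $\sigma_k$. Once that is shown the bound fails, and the final $\nu_{\sigma_k}$ is irrelevant.

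Your fallback does not provide this computation (``$v_1,v_2$ chosen as above'', with no norm estimate). Its parameter choice is also inconsistent. For nonzero $\xi\in S_t$ we have $\|\xi\|\le\lambda\|\nu_t\xi\|\le\lambda\eta\|\xi\|$, so $\|\nu_t\|\le\eta$ and $\|\nu_t^{-1}\|\le\lambda$ together force $\lambda\eta\ge1$. With $\lambda$ near $1$ and $\eta$ small there is no admissible $\nu_t$ at all.

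The paper's own argument does not close this gap either. It asserts without verification that its $\nu_{\sigma_2}$ satisfies the norm bounds. That would require $2/\lambda\le\|j_{s_1,\sigma_2}(v_1)+v_2\|\le 2\eta$, which is impossible for its stated choice $\lambda=1.2$, $\eta=0.8$.
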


\begin{proof}
We take $\kappa = \omega$ here for concreteness; the same construction
works for any $\kappa \geq \aleph_1$ by the same argument. Let
$s_1 = \{1\}$, $s_2 = \{2\}$, $\sigma_2 = \{1,2\}$, $X = \mathbb{R}$.
At stages $s_1$ and $s_2$, take $n = 2$ and
$S_{s_i} = \operatorname{span}\{e_1+e_2\} \subset \ell_\infty^2$. Since
the only functionals on $\ell_\infty^2$ annihilating $S_{s_i}$ are
multiples of $b_1-b_2$, we use $f_i([(b,e)]) = (b_1-b_2)/2$ in place
of a coordinate functional. Choose
$v_i = [(e_1-e_2,0)] \in G_{s_i}$, so $f_i(v_i) = 1$ and
$\|v_i\| \leq \lambda$.

At stage $\sigma_2$, take $n_{\sigma_2} = 4$ and define
$\nu_{\sigma_2}: S_{\sigma_2} \to Y_{\sigma_2}$ by mapping the generator
corresponding to $j_{s_1,\sigma_2}(v_1)$ to $e_1+e_2$, and $v_2$ to
$e_1-e_2$. This determines
$S_{\sigma_2} = \operatorname{span}\{e_1+e_2, e_3+e_4\} \subset
\ell_\infty^4$, a valid non-coordinate proper subspace satisfying the
norm bounds of \cite[Lemma 3.5, property (b)]{Lo13}. Then:
\begin{equation}
    \phi_{\sigma_2}(j_{s_1,\sigma_2}(v_1)) = e_1+e_2, \qquad
    \phi_{\sigma_2}(v_2) = e_1-e_2.
\end{equation}
For $a_1 = a_2 = 1$:
\begin{equation}
    \phi_{\sigma_2}(j_{s_1,\sigma_2}(v_1) + v_2) =
    (e_1+e_2) + (e_1-e_2) = 2e_1,
\end{equation}
so $\|j_{s_1,\sigma_2}(v_1) + v_2\| \geq \lambda^{-1}\|2e_1\|_{\ell_\infty^4}
= 2/\lambda$. For $\lambda \in (1, \sqrt{2})$ --- achievable within the
valid parameter range, e.g.\ $\lambda = 1.2$, $\eta = 0.8$ satisfying
$\lambda\eta = 0.96 < 1$ --- this exceeds
$\lambda \cdot \max(|a_1|,|a_2|) = \lambda$, violating the upper bound.
\end{proof}

\begin{remark}
The coordinate embedding realization is necessary for the explicit
construction as structured. Without it: (1) the coordinate functional
$e^*_{\gamma_{k+1}}$ need not be well-defined on the pushout quotient
(Proposition~\ref{prop:necessity_welldefined}); and (2) the upper bound
of Lemma~\ref{lem:c0equivalence} fails
(Proposition~\ref{prop:necessity_upperbound}). Note however that the
vanishing property Lemma~\ref{lem:general_vanishing} holds independently
of the coordinate embedding, for any bounded functional reading only the
$\ell_\infty^{n_{s'}}$ component.
\end{remark}

\begin{remark}
\label{rmk:agm_comparison}
In the separable setting, Argyros-Gasparis-Motakis \cite{AM16} obtain
the $c_0$ quotient without any analogue of the coordinate embedding.
Their proof first represents every separable $\mathscr{L}_\infty$-space
as a Bourgain-Delbaen space \cite[Theorem 3.6]{AM16}, where coordinate
functionals and the FDD upper bound \cite[Proposition 2.8]{AM16} are
available by construction. In the nonseparable setting, no
Bourgain-Delbaen representation is known to exist, and the compactness
arguments underlying \cite[Proposition~4.3]{AM16} fail for uncountable
index sets. The coordinate embedding is the precise nonseparable
substitute for the BD-FDD upper bound, and the preceding remark confirms
it is necessary for the explicit construction as structured.
\end{remark}

\section{Conclusion and Open Questions}

We have resolved Mazur's separable quotient problem for the class of
nonseparable Bourgain-Pisier $\mathscr{L}_\infty$-spaces constructed via
the Lopez-Abad method through two complementary approaches:

\begin{enumerate}
\item \textbf{General criterion (Theorem~\ref{thm:general}):} Any
$\mathscr{L}_\infty$-space $Y$ containing a subspace $X$ such that
$Y/X$ is infinite-dimensional with the Schur property admits $c_0$ as
a quotient. As a corollary, every nonseparable Bourgain-Pisier
$\mathscr{L}_\infty$-space resolves Mazur's problem affirmatively. The
proof combines the incompatibility of the Schur and Grothendieck
properties with the characterization of Grothendieck
$\mathscr{L}_\infty$-spaces.

\item \textbf{Explicit construction (Theorem~\ref{thm:explicit}):} In
the coordinate embedding realization, we construct an explicit bounded
linear surjection $T: Y \to c_0$ via coordinate functionals, prove
surjectivity via successive approximation, and show the kernel $\ker(T)$
is an $\mathscr{L}_{\infty,\lambda}$-space of density $\kappa$, yielding
the short exact sequence $0 \to K \to Y \to c_0 \to 0$ within the class
of $\mathscr{L}_\infty$-spaces.
\end{enumerate}

The coordinate embedding assumption is necessary for the explicit
construction as structured: without it, both the well-definedness of
coordinate functionals and the $\ell^\infty$-equivalence upper bound
fail by explicit counterexample. The coordinate embedding is identified
as the precise nonseparable substitute for the Bourgain-Delbaen FDD
structure available in the separable theory.

The results raise several natural open questions:

\begin{enumerate}
\item \textbf{Isomorphism type of realizations.} Different choices of
the embedding $\nu_t: S_t \to Y_t$ at each stage may yield
non-isomorphic Banach spaces. If all valid realizations with the same
parameters $(X, D, \lambda, \eta)$ were isomorphic,
Theorem~\ref{thm:explicit} would extend unconditionally to all
realizations. Are all valid realizations isomorphic as Banach spaces?

\item \textbf{Explicit quotient maps for general realizations.} The
necessity results show that the explicit map $T$ constructed here fails
in a general realization. However, Theorem~\ref{thm:general} guarantees
that a $c_0$ quotient exists. Can one construct an explicit bounded
surjection onto $c_0$ for a general realization using different
techniques?

\item \textbf{Uniqueness of the kernel structure.} Does every $c_0$
quotient of a nonseparable Bourgain-Pisier $\mathscr{L}_\infty$-space
have a kernel that is an $\mathscr{L}_{\infty,\lambda}$-space of density
$\kappa$, or is this property specific to our construction?
\end{enumerate}

\section*{Acknowledgments}

The author thanks Dr.~Robert Pluta (Northeastern University) and
Dr.~Pavlos Motakis (York University) for invaluable guidance, detailed
feedback, and suggestions that shaped this work.

\end{document}